\newcommand{\R}{\mathbb{R}}
\newcommand{\W}{{\bf{W}}}
\newcommand{\V}{{\bf{V}}}
\newcommand{\I}{{\bf{I}}}
\numberwithin{equation}{section}
\theoremstyle{plain}
\newtheorem{Thm}{Theorem}[section]
\newtheorem{Cor}[Thm]{Corollary}
\newtheorem{Lem}[Thm]{Lemma}
\theoremstyle{definition}
\newtheorem{Rem}[Thm]{Remark}
\theoremstyle{remark}
\begin{document}
\title[Quasilinear elliptic equations and potential theory]
{Quasilinear elliptic equations with sub-natural growth terms and \\ nonlinear potential theory}

\author{Igor E. Verbitsky}
\address{Department of Mathematics, University of Missouri,  Columbia,   \newline 
Missouri 65211, USA}
\email{\href{mailto:verbitskyi@missouri.edu}{verbitskyi@missouri.edu}}
\begin{abstract} We discuss recent  advances in the theory of 
 quasilinear  equations  of the type 
$
-\Delta_{p} u  = \sigma u^{q}    \; \; \text{in} \;\; \R^n, 
$ 
in the 
case $0<q< p-1$, where $\sigma$ is  a nonnegative measurable function, or  measure,  for the  $p$-Laplacian $\Delta_{p}u= \text{div}(|\nabla u|^{p-2}\nabla u)$, as well as 
more general quasilinear, fractional Laplacian, 
and Hessian operators.

Within this context, we obtain  
some new results, in particular, necessary and sufficient conditions for the existence of  solutions $u \in \text{BMO}(\R^n)$,  $u \in L^r_{{\rm loc}}(\R^n)$, etc., 
and 
prove an enhanced 
version of Wolff's inequality for intrinsic nonlinear potentials associated with such  problems.  
  
\end{abstract}
\subjclass[2010]{Primary 35J92, 42B37; Secondary 35J20.} 
\keywords{Nonlinear potentials, BMO solutions, $p$-Laplacian, fractional Laplacian}
\dedicatory{Dedicated to Vladimir Maz'ya with affection and admiration}

\maketitle
\tableofcontents
\section{Introduction}\label{sect:intro}

We present recent advances, along with some new results,  in the existence, regularity, and nonlinear potential 
 theory associated with   the quasilinear elliptic equation  
\begin{equation}\label{eq:p-laplacian}
\begin{cases}
-\Delta_{p} u  = \sigma u^{q}, \quad u\ge 0  \quad \text{in} \;\; \R^n, \\
\liminf\limits_{x\rightarrow \infty}u(x) = 0,  
\end{cases}
\end{equation}
where $ \sigma\ge 0$ is a locally 
integrable function, or Radon measure (locally finite)  in $\R^n$,   in the \textit{sub-natural growth} case $0<q< p-1$. Such equations, together with the inhomogeneous problem    
\begin{equation}\label{eq:inhom}
\begin{cases}
-\Delta_{p} u  = \sigma u^{q} + \mu, \quad u\ge 0  \quad \text{in} \;\; \R^n, \\
\liminf\limits_{x\rightarrow \infty}u(x) = c,  
\end{cases}
\end{equation}
where $\sigma$, $\mu$ are nonnegative Radon measures, and $c\ge 0$ is a constant, have been treated in \cite{CV1}--\cite{CV3}, \cite{SV1}--\cite{SV3}, 
\cite{V3}. When $p=2$, these are sublinear elliptic equations (see \cite{BK}, \cite{QV}, 
\cite{V2}, 
and the literature cited there). 

The case $q\ge p-1$,  
which comprises Schr\"{o}dinger type equations with \textit{natural growth} terms 
when $q=p-1$, and superlinear type equations when $q>p-1$, is quite different  
(see, for example,   \cite{AP}, 
\cite{JMV}, \cite{JV}, \cite{PV1}, \cite{PV2}).

In this paper, we will be using weak solutions (possibly unbounded). 
More precisely, all solutions are understood to be  $p$-superharmonic (or equivalently, locally renormalized)  solutions   (see \cite{KKT}). We will  
assume that  
 $u\in L^{q}_{{\rm loc}}(\R^n, d\sigma)$, so that the right-hand side of \eqref{eq:p-laplacian} is a Radon measure.  
 
Among the new results obtained in  this paper are necessary and sufficient conditions on $\sigma$ for the existence of a nontrivial   solution $u \in L^{r}_{{\rm loc}}(\R^n)$ to \eqref{eq:p-laplacian} for $\frac{n(p-1)}{n-p}\le r<\infty$. Notice that  
for $0<r<\frac{n(p-1)}{n-p}$, every 
$p$-superharmonic function $u \in L^{r}_{{\rm loc}}(\R^n)$  (\cite{HKM}, \cite{MZ}). 

We will also characterize solutions   $u\in \textrm{BMO}(\R^n)$, as well as solutions in the more restricted class  
 \begin{equation}\label{eq:p-class}
\int_K |\nabla u|^p dx \le C \, \text{cap}_p (K), \quad \textrm{for all compact sets} \, \,  K \subset \R^n.
\end{equation}
 Here $\textrm{cap}_p (K)$ is the $p$-capacity 
defined by 
\begin{equation}\label{eq:p-cap}
\textrm{cap}_p (K) = \inf \left\{ \int_K |\nabla h|^p dx: \quad  h \in C^\infty_0(\R^n), \, \, 
h \ge 1 \, \, \text{on} \, \, K \right\}.
\end{equation}

We observe that in general, for the existence of a nontrivial solution $u$ to \eqref{eq:p-laplacian},  $\sigma$ must be absolutely continuous with respect to $p$-capacity, that is, 
$\sigma(K)=0$ whenever $\text{cap}_p (K)=0$. More precisely, if $u$ is a nontrivial solution  to \eqref{eq:p-laplacian}, then, 
for all compact sets 
$K \subset \mathbb{R}^n$,  we have \cite{CV2}*{Lemma 3.6}, 
\begin{equation}\label{abs-cap0}
\sigma(K)\le C\, \left[\text{cap}_p(K)\right]^{\frac{q}{p-1}}\left(\int_K u^q d\sigma\right)^{\frac{p-1-q}{p-1}}.
\end{equation}

The existence  of  solutions $u\in L^\infty(\R^n)$ 
to \eqref{eq:p-laplacian}  was characterized by Brezis and Kamin  \cite{BK} in the case $p=2$. 
They  also proved uniqueness of bounded solutions.   However, a complete  characterization of  solutions $u \in L^{r}(\R^n)$ with $r<\infty$ 
turned out to be more complicated (see \cite{V3} and the discussion below). Some sharp sufficient 
conditions for $u \in L^{r}(\R^n)$  were established recently in \cite{SV3}. See also \cite{CV1}, \cite{SV1}, 
\cite{SV2}, where finite energy solutions and their generalizations are treated.

Our main tools include certain nonlinear potentials associated with \eqref{eq:p-laplacian}.  Let $ \mathcal{M}^{+}(\R^n)$ denote the class of all (locally finite) Radon measures 
on $\R^n$. 
Given  a measure 
$\sigma \in \mathcal{M}^{+}(\R^n)$, 
 $1<r<\infty$ and 
$0<\alpha<\frac{n}{r}$, 
the Havin-Maz'ya-Wolff potential, introduced in \cite{HM},   is defined by 
\begin{equation}\label{wolff}
\W_{\alpha, r}\sigma (x) = \int_{0}^{\infty} \left[ \frac{\sigma(B(x,\rho))}{\rho^{n-\alpha r}}\right]^{\frac{1}{r-1}}\; \frac{d\rho}{\rho}, \quad x \in \R^n,
\end{equation}
where $B(x,\rho)$ is a ball centered at $x \in \R^n$ of radius $\rho >0$. 

The nonlinear potential $\W_{\alpha, r}\sigma$,  
often called 
Wolff potential, appears in harmonic analysis, approximation theory  and Sobolev spaces,  in particular spectral synthesis problems, as well as  quasilinear and fully nonlinear PDE  (see \cite{AH},   \cite{HW}, \cite{KM}, \cite{Lab}, \cite{Maz}, \cite{MZ}, \cite{PV1}). 

In the linear case $r=2$, we have $\W_{\alpha, r}\sigma = \I_{2\alpha}\sigma$  (up to a constant multiple), where the Riesz potential of order $\beta\in (0, n)$ is defined by 
\[
\I_{\beta}\sigma(x)=\int_{\R^n} \frac{d \sigma (y)}{|x-y|^{n-\beta}}, \quad x \in \R^n.  
\]

A related nonlinear potential is defined, for $1<r<\infty$, $0<\alpha<\frac{n}{r}$, by 
\begin{equation}\label{v-def}
\V_{\alpha, r}\sigma(x)  = \I_\alpha [(\I_\alpha \sigma)]^{r'-1}(x), \quad x \in \R^n.
\end{equation}
This is  the Havin-Maz'ya potential, which serves as the core notion 
 of the nonlinear potential theory 
developed 
in \cite{HM}. It is easy to see that, for all  $x \in \R^n$, 
\begin{equation}\label{v-pot}
\V_{\alpha, r}\sigma(x) \ge c (\alpha, r, n) \,  \W_{\alpha, r} \sigma(x). 
\end{equation}
The converse pointwise inequality holds only for $2 - \frac{\alpha}{n}<r<\infty$ 
(see \cite{HM}, \cite{Maz}).

Nonlinear potentials $\W_{1, p} \sigma$ ($1<p<\infty$) are intimately related to 
 the equation 
\begin{equation}\label{p-lap}
\begin{cases}
-\Delta_{p} u = \sigma, \quad u \ge 0 \quad \text{in} \;\; \mathbb{R}^n, \\ 
\liminf\limits_{x \rightarrow \infty} u(x) = 0, 
\end{cases}
\end{equation}
where $\sigma \in \mathcal{M}^{+}(\R^n)$.

The following important result is due to T.~Kilpel\"{a}inen and J.~Mal\'y 
\cite{KiMa}:  \textit{Suppose 
$u\ge 0$ is a $p$-superharmonic solution to \eqref{p-lap}. Then
\begin{equation}\label{k-m}
K^{-1}\W_{1,p} \sigma (x) \leq u(x) \leq K \W_{1,p}\sigma(x),
\end{equation} 
where $K=K(n,p)$ is a positive constant.}

It is known that  a nontrivial solution $u$  to \eqref{p-lap} 
exists if and only if 
\begin{equation}\label{k-m-cond}
\int_{1}^{\infty} \Big( \frac{\sigma(B(0,\rho))}{\rho^{n-p}}\Big)^{\frac{1}{p-1}}\frac{d \rho}{\rho}<\infty.
\end{equation} 
This is equivalent to $\W_{1,p} \sigma(x)<\infty$ for some  $x \in \R^n$, or 
equivalently quasi-everywhere (q.e.) on $\R^n$. 
In particular, \eqref{k-m-cond} may hold only in the case$1<p<n$, unless $\sigma= 0$.


The following bilateral pointwise estimates of nontrivial (minimal) solutions $u$ to 
\eqref{eq:p-laplacian} in the  case $0<q<p-1$  are fundamental 
to our approach   \cite{CV2}:
\begin{equation} \label{two-sided}
c^{-1} [(\W_{1, p} \sigma)^{\frac{p-1}{p-1-q}}+\mathbf{K}_{1, p, q}  \sigma] 
\le u \le c [(\W_{1, p} \sigma)^{\frac{p-1}{p-1-q}} +\mathbf{K}_{1, p, q}  \sigma],  
\end{equation}
where $c>0$ is a constant which  depends only on $p$, $q$, and $n$. 

Here $\mathbf{K}_{1, p, q}$ is the so-called \textit{intrinsic} nonlinear potential 
associated with \eqref{eq:p-laplacian}, which was introduced in \cite{CV2}.  
It is defined in terms of the  localized weighted norm inequalities,    
\begin{equation} \label{weight-lap}
\left(\int_{B} |\varphi|^q \, d \sigma\right)^{\frac 1 q} \le \varkappa(B) \,  ||\Delta_p \varphi||^{\frac{1}{p-1}}_{L^1(\R^n)},  
\end{equation} 
for all test functions $\varphi$ such that  $-\Delta_p \varphi \ge 0$, $\displaystyle{\liminf_{x \to \infty}} \, \varphi(x)=0$. Here $\varkappa(B)$ denotes the least constant in \eqref{weight-lap} 
associated with the measure $\sigma_B=\sigma|_B$ restricted to a ball 
$B=B(x, \rho)$.  
Then the \textit{intrinsic} nonlinear potential $\mathbf{K}_{1, p, q}$ is defined by 
\begin{equation} \label{potentialK}
\mathbf{K}_{1, p, q}  \sigma (x)  =  \int_0^{\infty} \Big(\frac{ [\varkappa(B(x, \rho))]^{\frac{q(p-1)}{p-1-q}}}{\rho^{n- p}}\Big)^{\frac{1}{p-1}}\frac{d\rho}{\rho}, \quad x \in \R^n.
\end{equation} 
As was shown in \cite{CV2},  $\mathbf{K}_{1, p, q}  \sigma \not\equiv + \infty$ if and only if 
\begin{equation}\label{suffcond1}
\int_1^{\infty} \Big(\frac{ [\varkappa(B(0, \rho))]^{\frac{q(p-1)}{p-1-q}}}{\rho^{n- p}}\Big)^{\frac{1}{p-1}}\frac{d\rho}{\rho} < \infty.  
\end{equation}

Consequently, a nontrivial $p$-superharmonic solution $u$ to  \eqref{eq:p-laplacian} exists if and only if both $\mathbf{K}_{1, p, q}  \sigma \not\equiv + \infty$ and 
$\W_{1, p} \sigma\not\equiv + \infty$, that is, 
\begin{equation}\label{suff-cond}
\int_{1}^{\infty} \Big[ \Big( \frac{\sigma(B(0,\rho))}{\rho^{n-p}}\Big)^{\frac{1}{p-1}} +  \Big(\frac{ [\varkappa(B(0, \rho))]^{\frac{q(p-1)}{p-1-q}}}{\rho^{n- p}}\Big)^{\frac{1}{p-1}}\Big]\frac{d\rho}{\rho} < \infty.  
\end{equation}

Wolff's inequality  \cite{HW}, which holds for all $1<r<\infty$,  $0<\alpha<\frac{n}{r}$, states  
\begin{equation}\label{wolff-ineq}
\mathcal{E}_{\alpha, r}[\sigma] = \int_{\R^n} (\I_\alpha \sigma)^{r'} dx \le C(\alpha, r, n) \, 
\int_{\R^n} \W_{\alpha, r}\sigma \, d \sigma,
\end{equation}
where  $r'=\frac{r}{r-1}$, and $\mathcal{E}_{\alpha, r}[\sigma]$ is the $(\alpha, r)$-energy. 
The converse inequality holds as well, since by Fubini's theorem and \eqref{v-pot}, 
\[
\int_{\R^n} (\I_\alpha \sigma)^{r'} dx = \int_{\R^n} \V_{\alpha, r} \sigma \, d \sigma \ge
 c(\alpha, r, n) \int_{\R^n} \W_{\alpha, r} \sigma \, d \sigma.  
\]

Thus, Wolff's inequality shows that, for all $1<r<\infty$,   $0<\alpha<\frac{n}{r}$, 
\begin{equation}\label{wolff-in}
\mathcal{E}_{\alpha, r}[\sigma]  \approx    \, \int_{\R^n} \W_{\alpha, r}\sigma \, d \sigma=  \int_{\R^n}\int_{0}^{\infty} \left[ \frac{\sigma(B(x,\rho))}{\rho^{n-\alpha r}}\right]^{\frac{1}{r-1}}\; \frac{d\rho}{\rho}d \sigma(x), 
\end{equation}
where the constants of equivalence depend only on $\alpha, r$, and $n$.

Several proofs of \eqref{wolff-ineq} are known, starting with the original proof 
due to Th. Wolff \cite{HW} (see also  \cite{AH}, \cite{HJ}, \cite{V1}). 
In particular, it can be deduced  
from an inequality  of Muckenhoupt and Wheeden 
for fractional integrals and  maximal functions \cite{MW} in weighted $L^r$ spaces 
(with $A_\infty$ weights). 
 A two-weight version and applications can be found in 
in \cite{COV3}, \cite{HV1}, \cite{HV2}.

It follows from  \eqref{two-sided} that 
 a necessary 
and sufficient condition for the existence of a solution $u \in L^r(\R^n)$ 
to \eqref{eq:p-laplacian} is given by: 
\begin{equation} \label{cond-two-sided}
\W_{1, p}  \sigma \in L^{\frac{r(p-1)}{p-1-q}}(\R^n) \quad \textrm{and} \quad \mathbf{K}_{1, p, q}  \sigma \in L^r(\R^n).
\end{equation}

Actually,  the first condition in  \eqref{cond-two-sided} 
is a consequence of the second one. Moreover,  the second condition
 in \eqref{cond-two-sided} can be simplified using an analogue of Wolff's inequality 
  for potentials $\mathbf{K}^d_{1, p, q}  \sigma$ \cite{V3}*{Theorem 1.1}:
\begin{equation}\label{est-wolff}
\Vert \mathbf{K}_{1, p, q}  \sigma \Vert^r_{L^r(\R^n)} \approx 
 \int_{\R^n} \int_{0}^\infty \left (\frac{ [\varkappa(B(x, \rho))]^{\frac{q(p-1)}{p-1-q}}}{\rho^{n-p}}\right)^{\frac{r}{p-1}} \frac{d \rho}{\rho} dx.
\end{equation}

In this paper, we obtain the following enhanced form of \eqref{est-wolff}.

\begin{Thm}\label{thm:main1}
Let $1<p<n$, $0<q<p-1$, $\frac{n(p-1)}{n-p}<r<\infty$, and $\sigma \in \mathcal{M}^{+}(\R^n)$. Then 
\begin{equation}\label{main-b} 
\Vert \mathbf{K}_{1, p, q}  \sigma \Vert^r_{L^r(\R^n)} \approx  \int_{\R^n} \sup_{\rho >0} \left (\frac{ [\varkappa(B(x, \rho))]^{\frac{q(p-1)}{p-1-q}}}{\rho^{n-p}}\right)^{\frac{r}{p-1}} dx,
\end{equation}
where the constants of equivalence depend only  on $p, q, r$, and $n$. 

If $n \leq p < \infty$, or $1<p<n$ 
and $0<r\le \frac{n(p-1)}{n-p}$, then  
$\mathbf{K}_{1, p, q}  \sigma \in L^r(\R^n)$ only if $\sigma=0$.  
\end{Thm}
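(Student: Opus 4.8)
The plan is to compare the two sides of \eqref{main-b} and, in the degenerate range, to show the right-hand side is already infinite unless $\sigma=0$. I would organize the argument around the quantity
\[
\Phi(x,\rho) = \left (\frac{ [\varkappa(B(x, \rho))]^{\frac{q(p-1)}{p-1-q}}}{\rho^{n-p}}\right)^{\frac{1}{p-1}},
\]
so that $\mathbf{K}_{1,p,q}\sigma(x) = \int_0^\infty \Phi(x,\rho)\,\frac{d\rho}{\rho}$ and the right-hand side of \eqref{main-b} is $\int_{\R^n} \sup_{\rho>0}\Phi(x,\rho)^r\,dx$. The trivial inequality $\sup_{\rho} \Phi(x,\rho)^r \le C\,\mathbf{K}_{1,p,q}\sigma(x)^r$ fails pointwise, of course — the supremum controls only a single scale — so the substance of \eqref{main-b} is that, after integration in $x$, the full integral $\mathbf{K}_{1,p,q}\sigma$ is no larger (up to constants) than its pointwise "maximal scale'' version. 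For the direction $\lesssim$ I would invoke the Wolff-type inequality \eqref{est-wolff} already available from \cite{V3}, which rewrites $\Vert \mathbf{K}_{1,p,q}\sigma\Vert_{L^r}^r$ as $\int_{\R^n}\int_0^\infty \Phi(x,\rho)^r\,\frac{d\rho}{\rho}\,dx$; the remaining task is to absorb the $\rho$-integral against the supremum. The key structural fact is a doubling/quasi-decreasing property of $\rho \mapsto \Phi(x,\rho)$: since $0<q<p-1$ the exponent $\frac{q(p-1)}{p-1-q}$ is positive and the weighted-norm constant $\varkappa(B(x,\rho))$ grows in $\rho$ at a controlled polynomial rate (this is exactly the kind of estimate underlying \eqref{suffcond1}), while $\rho^{-(n-p)}$ decays; combining these, $\Phi(x,\cdot)$ behaves like a function whose $\log$-scale integral is comparable to its peak value times a bounded factor. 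This converts $\int_0^\infty \Phi^r\,\frac{d\rho}{\rho}$ into $\approx \sup_\rho \Phi^r$ pointwise in $x$, and integrating in $x$ gives $\lesssim$. The reverse inequality $\gtrsim$ is the easy one: $\sup_\rho \Phi(x,\rho)^r \le C \int_0^\infty \Phi(x,\rho)^r\,\frac{d\rho}{\rho}$ using again that $\Phi(x,\cdot)$ does not oscillate wildly on the $\log$ scale, so the right-hand side of \eqref{main-b} is dominated by the right-hand side of \eqref{est-wolff}.

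I expect the main obstacle to be the quantitative control of $\rho\mapsto\varkappa(B(x,\rho))$ across scales — specifically, establishing that it enjoys a two-sided power-type comparison $\varkappa(B(x,t))/\varkappa(B(x,s)) \lesssim (t/s)^{\gamma}$ for $s<t$ with $\gamma$ small enough that the net exponent in $\Phi$ stays "summable on one side and decaying on the other,'' uniformly in $x$. Monotonicity of $\varkappa$ in the ball is immediate from its definition as a least constant for $\sigma_B$ with $B$ increasing, but the \emph{upper} rate of growth requires unwinding \eqref{weight-lap}: one tests against truncated Wolff potentials of $\sigma_B$ and uses the Kilpeläinen–Malý bounds \eqref{k-m} together with \eqref{two-sided} to estimate $\varkappa(B(x,\rho))$ in terms of $\sigma(B(x,C\rho))$ at comparable scales. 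This is where the hypothesis $r > \frac{n(p-1)}{n-p}$ enters decisively: it guarantees that the exponent $\frac{r(n-p)}{p-1} > n$, which is precisely the threshold making the elementary one-variable lemma "$\int_0^\infty f(\rho)^r\,\frac{d\rho}{\rho}\approx \sup_\rho f(\rho)^r$ for $f$ quasi-power-like with the right homogeneity'' hold with $x$-uniform constants after integration.

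For the second assertion, suppose $1<p<n$ and $0<r\le\frac{n(p-1)}{n-p}$, or $p\ge n$. I would argue by contradiction: if $\mathbf{K}_{1,p,q}\sigma\in L^r(\R^n)$ with $\sigma\ne 0$, then by \eqref{est-wolff} the integral $\int_{\R^n}\int_0^\infty \Phi(x,\rho)^r\,\frac{d\rho}{\rho}\,dx$ is finite, and in particular $\mathbf{K}_{1,p,q}\sigma\not\equiv+\infty$, so \eqref{suffcond1} forces $1<p<n$ — disposing of the case $p\ge n$. In the remaining case $0<r\le\frac{n(p-1)}{n-p}$, I would extract a lower bound for $\Phi(x,\rho)$ at large $\rho$: fixing a ball $B_0$ with $\sigma(B_0)=m>0$, for $\rho$ large and $|x|\lesssim\rho$ one has $B_0\subset B(x,\rho)$, hence $\varkappa(B(x,\rho))\gtrsim \varkappa_0>0$ is bounded below by a constant depending on $m$ and $B_0$ but not on $\rho$ or $x$; consequently $\Phi(x,\rho)^r \gtrsim \rho^{-\frac{r(n-p)}{p-1}}$. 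Since $r\le\frac{n(p-1)}{n-p}$ means $\frac{r(n-p)}{p-1}\le n$, integrating $\Phi(x,\rho)^r$ in $x$ over the ball $\{|x|<\rho\}$ (of measure $\approx\rho^n$) and then in $\rho$ over $(R,\infty)$ gives a divergent integral, contradicting finiteness. The one point to handle carefully is the lower bound $\varkappa(B(x,\rho))\gtrsim\varkappa_0$ uniformly for $|x|<\rho$ large: this follows because $\varkappa$ is monotone under enlargement of the ground measure's support, so restricting $\sigma$ to $B_0$ only decreases the least constant, and the least constant for $\sigma|_{B_0}$ is a fixed positive number independent of the enclosing ball $B(x,\rho)\supset B_0$.
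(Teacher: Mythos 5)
Your easy direction ($\sup_\rho \Phi \lesssim \mathbf{K}_{1,p,q}\sigma$) is fine, though the mechanism is not a ``does not oscillate wildly'' principle but simply monotonicity of $\rho\mapsto\varkappa(B(x,\rho))$: on $[R,2R]$ one has $\varkappa(B(x,\rho))\ge\varkappa(B(x,R))$ and $\rho^{n-p}\le(2R)^{n-p}$, which yields $\Phi(x,R)\lesssim\int_R^{2R}\Phi(x,\rho)\,d\rho/\rho$. Your treatment of the degenerate range $p\ge n$ or $r\le n(p-1)/(n-p)$ is also essentially correct: a ball $B_0$ with $\sigma(B_0)>0$ forces $\varkappa(B(x,\rho))\ge\varkappa_0>0$ whenever $B_0\subset B(x,\rho)$ (monotonicity of the embedding constant in the ground measure), giving a power-type lower bound for $\mathbf{K}_{1,p,q}\sigma(x)$ at infinity that is not $r$-integrable precisely in the degenerate range.

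However, the hard direction $\lesssim$ is where your proposal goes wrong, and this is the substantive content of the theorem. You claim that a quasi-power bound on $\rho\mapsto\varkappa(B(x,\rho))$ converts the $\rho$-integral into the supremum \emph{pointwise in $x$}. This is false. The critical growth rate $\varkappa(B(x,\rho))^{q(p-1)/(p-1-q)}\sim c\,\rho^{n-p}$ is fully consistent with the definition of $\varkappa$ (indeed it is the borderline behavior that the finiteness condition \eqref{suffcond1} just tolerates), and on any $\log$-range where $\varkappa$ grows at this rate, $\Phi(x,\rho)$ is roughly \emph{constant} in $\rho$. On such a set the ratio $\frac{\int_0^\infty\Phi(x,\rho)^r\,d\rho/\rho}{\sup_\rho\Phi(x,\rho)^r}$ is of order the length of the $\log$-interval, which can be arbitrarily large while $\mathbf{K}_{1,p,q}\sigma(x)$ stays finite. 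So no pointwise comparison holds, and no refinement of the quasi-power lemma can rescue the argument. What actually saves the theorem is \emph{spatial} averaging: the places $x$ where $\mathbf{K}_{1,p,q}\sigma(x)$ has a long plateau necessarily occupy a small region, because $\varkappa(B(x,\rho))$ depends on $x$ only weakly at scale $\rho$ (it is essentially constant on $B(y,\rho)$ for $y$ nearby), so the contribution to $\int(\mathbf{K}\sigma)^r\,dx$ from a long plateau at scale $T$ comes from a region of measure $\sim T^n$, which is exactly matched on the right-hand side. The paper captures this through a dyadic reduction: first it transfers to $\mathbf{K}^d_{1,p,q}\sigma$, then applies the discrete $L^r$ identity of \cite{COV2}*{Proposition 2.2}, which converts $\Vert\sum_Q a_Q\chi_Q\Vert_{L^r}^r$ (for $r>1$) into $\int\sup_{P\ni x}\bigl(|P|^{-1}\sum_{Q\subseteq P}a_Q|Q|\bigr)^r dx$ --- a Carleson/maximal-function structure --- and finally shows that the inner Carleson sum is controlled by $a_P$ itself, using the intrinsic-potential estimates \eqref{est-dd} from \cite{CV2} and the telescoping identity \eqref{telescope} for the dyadic sums $\sum_{Q\subseteq P}|Q|^{\alpha p/n}$. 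These ingredients, above all the Carleson-sum reduction, are what your proposal is missing; without them the proof does not go through.
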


As a corollary of Theorem \ref{thm:main1}, together with the results of \cite{V3}, we deduce that \eqref{eq:p-laplacian}  has a nontrivial solution $u \in L^r(\R^n)$ 
if and only if 
\begin{equation}\label{cond-max}
\int_{\R^n} \sup_{\rho >0} \left (\frac{ [\varkappa(B(x, \rho))]^{\frac{q(p-1)}{p-1-q}}}{\rho^{n-p}}\right)^{\frac{r}{p-1}} dx<\infty.
\end{equation}

A necessary  (but generally not sufficient) condition 
for the existence of a nontrivial solution $u \in L^{r}(\R^n)$ to \eqref{eq:p-laplacian}   follows from \eqref{main-b},  
\begin{equation}\label{cond:wolff}  
\int_{\R^n} \sup_{\rho >0} \left (\frac{\sigma(B(x, \rho))}{\rho^{n-p}}\right)^{\frac{r}{p-1-q}} dx<\infty.
\end{equation}

In fact, \eqref{cond:wolff}  is equivalent to the condition $\W_{1, p}  \sigma \in L^{\frac{r(p-1)}{p-1-q}}(\R^n) $ by 
the Muckenhoupt and Wheeden inequality  
\cite{MW} and its extensions (see \cite{HJ}, \cite{JPW}, \cite{V3}).


Using Theorem \ref{thm:main1}, we deduce the following existence results for 
equation \eqref{eq:p-laplacian}.

\begin{Thm}\label{thm:main2}
Let  $1<p<n$, $0<q<p-1$, and $\sigma \in \mathcal{M}^{+}(\R^n)$ with 
$\sigma \not\equiv 0$. Suppose that $\frac{n(p-1)}{n-p}\le r<\infty$.  Then 
there exists a nontrivial  solution $u \in L^r_{{\rm loc}}(\R^{n})$ to 
\eqref{eq:p-laplacian} if and only if condition  \eqref{suff-cond} holds, and additionally 
\begin{equation}\label{main-ex} 
\int_{B(0, R)} \sup_{0<\rho<R} \left (\frac{ [\varkappa(B(x, \rho))]^{\frac{q(p-1)}{p-1-q}}}{\rho^{n-p}}\right)^{\frac{r}{p-1}} dx<\infty,
\end{equation}
for all $R>0$. 

If $0<r <\frac{n(p-1)}{n-p}$, then there exists  a nontrivial  solution  
$u \in L^r_{{\rm loc}}(\R^{n})$ to \eqref{eq:p-laplacian}  whenever 
condition  \eqref{suff-cond} holds. 
\end{Thm}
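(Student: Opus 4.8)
The plan is to pass from an arbitrary nontrivial solution to the minimal one, invoke the two-sided bound \eqref{two-sided}, and then reduce the $L^r_{\rm loc}$-membership of the right-hand side of \eqref{two-sided} to a \emph{localized} form of Theorem~\ref{thm:main1}. Write $\gamma=\frac{q(p-1)}{p-1-q}$, so that the integrand of \eqref{potentialK} is $(\varkappa(B(x,\rho))^{\gamma}\rho^{p-n})^{1/(p-1)}$. Recall that \eqref{suff-cond} is necessary and sufficient for the existence of a nontrivial $p$-superharmonic solution of \eqref{eq:p-laplacian}, in which case the minimal solution $\underline u$ exists, obeys \eqref{two-sided}, and satisfies $\underline u\le u$ for every nonnegative solution $u$ (\cite{CV2}). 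Thus, for $\sigma\not\equiv0$, a nontrivial $u\in L^r_{\rm loc}(\R^n)$ exists if and only if \eqref{suff-cond} holds and $(\W_{1,p}\sigma)^{\frac{p-1}{p-1-q}}+\mathbf{K}_{1,p,q}\sigma\in L^r_{\rm loc}(\R^n)$. For $0<r<\frac{n(p-1)}{n-p}$ the second requirement is automatic once \eqref{suff-cond} holds, since every $p$-superharmonic function lies in $L^r_{\rm loc}(\R^n)$ for such $r$ (\cite{HKM},\cite{MZ}); this settles the last assertion. So I may assume $\frac{n(p-1)}{n-p}\le r<\infty$, and the task is to show that, under \eqref{suff-cond}, $(\W_{1,p}\sigma)^{\frac{p-1}{p-1-q}}+\mathbf{K}_{1,p,q}\sigma\in L^r_{\rm loc}(\R^n)$ if and only if \eqref{main-ex} holds for every $R>0$.

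Next I would record three elementary facts. First, bounding $\mathbf{K}_{1,p,q}\sigma(x)$ from below by the piece of the integral \eqref{potentialK} over $\rho\in(\rho_0/2,\rho_0)$ — where $\varkappa(B(x,\rho))\ge\varkappa(B(x,\rho_0/2))$ and $\rho^{p-n}\ge\rho_0^{p-n}$ — and letting $\rho_0$ vary yields the pointwise estimate $\mathbf{K}_{1,p,q}\sigma(x)\ge c\,\sup_{\rho>0}(\varkappa(B(x,\rho))^{\gamma}\rho^{p-n})^{1/(p-1)}$ with $c=c(n,p,q)>0$; hence $\mathbf{K}_{1,p,q}\sigma\in L^r_{\rm loc}(\R^n)$ already forces \eqref{main-ex} for all $R$ — the easy (``only if'') half. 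Second, testing \eqref{weight-lap} against the $p$-fundamental solution with pole at the center of $B(x,\rho)$ gives $\varkappa(B(x,\rho))\ge c\,\sigma(B(x,\rho))^{1/q}\rho^{-(n-p)/(p-1)}$, so that $(\varkappa(B(x,\rho))^{\gamma}\rho^{p-n})^{1/(p-1)}\ge c\,(\sigma(B(x,\rho))\rho^{p-n})^{1/(p-1-q)}$; consequently $\mathbf{K}_{1,p,q}\sigma\in L^r_{\rm loc}$ implies that the fractional maximal function $\sup_{\rho>0}\sigma(B(x,\rho))\rho^{p-n}$ lies in $L^{r/(p-1-q)}_{\rm loc}$, and then the localized Muckenhoupt--Wheeden inequality for Wolff potentials (\cite{MW},\cite{HJ},\cite{JPW},\cite{V3}) gives $\W_{1,p}\sigma\in L^{\frac{r(p-1)}{p-1-q}}_{\rm loc}(\R^n)$, i.e. $(\W_{1,p}\sigma)^{\frac{p-1}{p-1-q}}\in L^r_{\rm loc}$. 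Third, under \eqref{suff-cond}, for $x\in B(0,R)$ the tails $\int_R^\infty$ of the integrals \eqref{wolff} and \eqref{potentialK} are bounded by a constant depending only on $R$ (use $B(x,\rho)\subset B(0,2\rho)$ for $\rho\ge R$ together with \eqref{suffcond1} and \eqref{k-m-cond}). Combining these, the whole theorem reduces to proving that the truncated potential $\mathbf{K}^R_{1,p,q}\sigma(x):=\int_0^R(\varkappa(B(x,\rho))^{\gamma}\rho^{p-n})^{1/(p-1)}\,d\rho/\rho$ belongs to $L^r(B(0,R))$ for every $R>0$ if and only if \eqref{main-ex} holds for every $R>0$, the nontrivial direction being ``if''.

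For $r>\frac{n(p-1)}{n-p}$ I would derive this implication from Theorem~\ref{thm:main1} itself, applied to the compactly supported measure $\tau=\sigma|_{B(0,4R)}$: one has $\varkappa_\tau(B(x,\rho))=\varkappa_\sigma(B(x,\rho))$ whenever $B(x,\rho)\subset B(0,4R)$, while $\varkappa_\tau\le\varkappa_\sigma$ everywhere and $\varkappa_\tau(B(x,\rho))\le\varkappa_\tau(B(0,4R))$ for all $\rho$; splitting the supremum on the right of \eqref{main-b} for $\tau$ into scales $\lesssim R$ (comparable to the corresponding quantity for $\sigma$, hence controlled by \eqref{main-ex}) and scales $\gtrsim R$ (on which the integrand decays like $|x|^{-(n-p)r/(p-1)}$, integrable over $\R^n$ precisely because $r>\frac{n(p-1)}{n-p}$), one finds that the right-hand side of \eqref{main-b} for $\tau$ is finite, so $\mathbf{K}_{1,p,q}\tau\in L^r(\R^n)$ by Theorem~\ref{thm:main1}; and since $\mathbf{K}^R_{1,p,q}\sigma=\mathbf{K}^R_{1,p,q}\tau\le\mathbf{K}_{1,p,q}\tau$ on $B(0,R)$, the conclusion follows. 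This route does not reach the endpoint $r=\frac{n(p-1)}{n-p}$, where the large-scale term is only logarithmically divergent at infinity; there one needs instead the truncated analogues — valid for all $r>0$ on a fixed ball $B(0,R)$, the restriction in Theorem~\ref{thm:main1} stemming only from behavior at infinity — of the identity \eqref{est-wolff} and of the pointwise equivalence $\int_0^R(\varkappa(B(x,\rho))^{\gamma}\rho^{p-n})^{r/(p-1)}\,d\rho/\rho\approx\sup_{0<\rho<R}(\varkappa(B(x,\rho))^{\gamma}\rho^{p-n})^{r/(p-1)}$, which are obtained by rerunning the dyadic/stopping-time argument behind Theorem~\ref{thm:main1} with all scales confined to $(0,R)$.

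The main obstacle is exactly this last point — proving the localized (truncated) enhanced Wolff inequality down to and including the endpoint $r=\frac{n(p-1)}{n-p}$, where Theorem~\ref{thm:main1} cannot be quoted and its proof must be adapted to the bounded-domain setting. A secondary technical point is ensuring that the lower bound in \eqref{two-sided} is available for the minimal solution, together with the comparison $\underline u\le u$, so that the argument covers an arbitrary nontrivial solution and not merely the minimal one.
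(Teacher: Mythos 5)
Your overall route --- pass to the minimal solution, invoke the two-sided bound \eqref{two-sided}, deduce the $\W_{1,p}$-condition from the $\mathbf{K}_{1,p,q}$-condition, and reduce the $\mathbf{K}$-condition in $L^r_{\rm loc}$ to a localized form of Theorem~\ref{thm:main1} --- is the same as the paper's. Your derivation of the necessity of \eqref{main-ex}, your testing argument for $\varkappa(B(x,\rho))\ge c\,\sigma(B(x,\rho))^{1/q}\rho^{-(n-p)/(p-1)}$ (which is what actually gives \eqref{low-k-est}), and your use of the Muckenhoupt--Wheeden/Honz\'ik--Jaye machinery to pass from the fractional maximal function of $\sigma$ to $\W_{1,p}\sigma\in L^{r(p-1)/(p-1-q)}_{\rm loc}$ all match or are equivalent to what the paper does in establishing \eqref{cond-two-loc}.

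Where you diverge is the proof that \eqref{main-ex} (together with \eqref{suff-cond}) is sufficient. You apply the \emph{global} Theorem~\ref{thm:main1} to the truncated measure $\tau=\sigma|_{B(0,4R)}$, splitting the supremum in \eqref{main-b} into small and large scales; the large-scale tail requires $\int_{|x|\ge R}|x|^{-(n-p)r/(p-1)}dx<\infty$, hence the strict inequality $r>\frac{n(p-1)}{n-p}$. The paper instead passes to the dyadic potential $\mathbf{K}^d_{1,p,q}$, decomposes $\int_P(\mathbf{K}^d_{1,p,q}\sigma)^r$ into cubes $Q\subseteq P$ and $R\supsetneq P$, and handles the local piece by the \emph{localized} dyadic lemma (Lemma~\ref{lemma-new-loc}), which does not introduce a global tail at all. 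That localized lemma is precisely the ``truncated analogue'' you identify as needed for the endpoint --- so your argument does cover $r>\frac{n(p-1)}{n-p}$, but you explicitly leave the endpoint $r=\frac{n(p-1)}{n-p}$ unresolved, naming the required localized lemma as ``the main obstacle.'' In the paper this is not an obstacle because Lemma~\ref{lemma-new-loc} is stated and proved (its proof reduces to \cite{COV2}*{Proposition 2.2} and the telescoping estimate \eqref{telescope}, neither of which sees the behavior at infinity). So the genuine gap in your write-up is exactly that you have not supplied the localized dyadic lemma that closes the endpoint case.

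One imprecision worth flagging: you assert that the truncated analogue of \eqref{est-wolff} would be ``valid for all $r>0$ on a fixed ball'', attributing the restriction $r>\frac{n(p-1)}{n-p}$ in Theorem~\ref{thm:main1} entirely to behavior at infinity. That is not quite right. Even in the truncated dyadic setting, the application of \cite{COV2}*{Proposition 2.2} (once in the case $p\ge 2$ with exponent $r$, once in the case $1<p<2$ with exponent $\frac{r}{p-1}$) requires the relevant exponent to exceed $1$, i.e. $r>p-1$ in the worst case. This restriction happens to be harmless here because $r\ge\frac{n(p-1)}{n-p}>p-1$, so it does not affect the theorem --- but the blanket claim ``all $r>0$'' overstates what the truncated argument gives.
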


The following corollary is deduced  from Theorem \ref{thm:main2}  under the additional assumption that there exists a constant $C=C(\sigma, p, n)$ so that 
\begin{equation}\label{cap-p}
\sigma(K)\le C\, {\rm cap}_p(K), \quad \textrm{for all compact sets} \,  \, 
K \subset \mathbb{R}^n.
\end{equation}

In the case $0<q<p-1$, condition  \eqref{cap-p} ensures  that  solutions $u$ 
to \eqref{eq:p-laplacian}  satisfy the Brezis--Kamin type pointwise estimates (\cite{BK}, \cite{CV3}): 
\begin{equation} \label{b-k}
c ^{-1} (\W_{1, p} \sigma)^{\frac{p-1}{p-1-q}} \le u \le c [(\W_{1, p} \sigma)^{\frac{p-1}{p-1-q}} + \W_{1, p} \sigma],  
\end{equation}
where $c=c (p, q, n)$ is a positive constant. 

We remark that condition  \eqref{cap-p} is also essential in the natural growth case 
$q=p-1$ (see, for instance, \cite{JMV}).  

\begin{Cor}\label{cor1} Let $1<p<n$ and $0<q<p-1$.  If 
$\sigma \in \mathcal{M}^{+}(\R^n)$ satisfies condition \eqref{cap-p},   then there exists a nontrivial solution 
$u \in L^r_{{\rm loc}}(\R^{n})$ 
to \eqref{eq:p-laplacian}, for any $0<r<\infty$,  if and only if  $\W_{1, p} \sigma\not\equiv\infty$, that is,  when \eqref{k-m-cond} holds. 
\end{Cor}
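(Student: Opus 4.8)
\textbf{Proof plan for Corollary \ref{cor1}.}
The plan is to deduce the corollary from Theorem \ref{thm:main2} by showing that, under the capacitary condition \eqref{cap-p}, the intrinsic potential $\mathbf{K}_{1,p,q}\sigma$ is dominated (up to a multiplicative constant) by $\W_{1,p}\sigma$, so that the extra hypothesis \eqref{main-ex} in Theorem \ref{thm:main2} becomes automatic. The necessity of \eqref{k-m-cond} is clear from the outset: if a nontrivial solution $u$ exists, then by \eqref{two-sided} we have $\W_{1,p}\sigma \not\equiv +\infty$, which by \eqref{k-m-cond} is exactly condition \eqref{k-m-cond}. So the whole content is sufficiency, and the key is the pointwise bound on $\varkappa(B)$.

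First I would estimate the local least constant $\varkappa(B(x,\rho))$ in the weighted inequality \eqref{weight-lap} using \eqref{cap-p}. The point is that \eqref{cap-p} says $\sigma$ is a trace measure for the Sobolev-type embedding controlled by $p$-capacity, and for a ball $B=B(x,\rho)$ the restricted measure $\sigma_B$ has total mass and local behavior controlled by $\sigma(B(x,\rho))$. Combining the Kilpeläinen--Malý bound \eqref{k-m} with a testing/duality argument for \eqref{weight-lap}, one obtains a bound of the form
\begin{equation*}
\varkappa(B(x,\rho)) \le C \left(\frac{\sigma(B(x,\rho))}{\rho^{n-p}}\right)^{\frac{p-1-q}{q(p-1)}},
\end{equation*}
i.e. precisely the exponent that makes
\begin{equation*}
\frac{[\varkappa(B(x,\rho))]^{\frac{q(p-1)}{p-1-q}}}{\rho^{n-p}} \le C\, \frac{\sigma(B(x,\rho))}{\rho^{n-p}}.
\end{equation*}
Alternatively, and more directly, this is exactly the content of the Brezis--Kamin estimate \eqref{b-k}: comparing \eqref{b-k} with the general two-sided bound \eqref{two-sided} forces $\mathbf{K}_{1,p,q}\sigma \le C\,\W_{1,p}\sigma$ pointwise under \eqref{cap-p}, which is the cleanest route and avoids re-deriving the capacitary trace estimate by hand.

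Granting $\mathbf{K}_{1,p,q}\sigma \le C\,\W_{1,p}\sigma$, I would then verify the hypotheses of Theorem \ref{thm:main2}. Condition \eqref{suff-cond} is the conjunction of $\W_{1,p}\sigma\not\equiv+\infty$ and $\mathbf{K}_{1,p,q}\sigma\not\equiv+\infty$; the pointwise domination makes the second a consequence of the first, so \eqref{suff-cond} reduces to \eqref{k-m-cond}. For the range $\frac{n(p-1)}{n-p}\le r<\infty$ one must additionally check \eqref{main-ex} for every $R>0$; here I would replace the supremum over $0<\rho<R$ by the truncated Wolff-type integrand using $\varkappa(B(x,\rho))^{q(p-1)/(p-1-q)}/\rho^{n-p}\le C\,\sigma(B(x,\rho))/\rho^{n-p}$, and then observe that the resulting integral $\int_{B(0,R)} \sup_{0<\rho<R}(\sigma(B(x,\rho))/\rho^{n-p})^{r/(p-1-q)}\,dx$ is finite for every $R$: indeed the truncated maximal-type quantity is finite a.e. and locally integrable whenever $\W_{1,p}\sigma\not\equiv+\infty$, because the local part of the Wolff potential is already summable at each point and a Muckenhoupt--Wheeden-type maximal estimate (cited after \eqref{cond:wolff}) transfers this to $L^r_{\rm loc}$ for the truncated supremum. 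For $0<r<\frac{n(p-1)}{n-p}$ the second half of Theorem \ref{thm:main2} gives existence of a nontrivial $u\in L^r_{\rm loc}$ directly from \eqref{suff-cond}, which again is just \eqref{k-m-cond}. Assembling these cases over all $0<r<\infty$ completes the proof.

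The main obstacle I anticipate is the local integrability claim needed for \eqref{main-ex} in the borderline and large-$r$ regime: one must be careful that the truncated supremum $\sup_{0<\rho<R}(\cdot)$ is genuinely controlled in $L^r_{\rm loc}$ merely from $\W_{1,p}\sigma\not\equiv+\infty$, rather than from a global smallness of $\sigma$. This is where the precise form of the Muckenhoupt--Wheeden / Hedberg--Jonsson estimates matters, and where one should either invoke \cite{V3} for the truncated localized version or argue by splitting $\sigma=\sigma|_{B(0,2R)}+\sigma|_{\R^n\setminus B(0,2R)}$ and handling the far part by a crude pointwise bound. Everything else is a bookkeeping reduction to Theorem \ref{thm:main2}.
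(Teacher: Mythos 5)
Your overall strategy is sound in spirit and broadly parallels the paper's: invoke the Brezis--Kamin estimates \eqref{b-k} under \eqref{cap-p} to reduce the problem to showing that $\W_{1,p}\sigma \in L^{\frac{r(p-1)}{p-1-q}}_{\rm loc}(\R^n)$ (equivalently, that $(\W_{1,p}\sigma)^{\frac{p-1}{p-1-q}} \in L^{r}_{\rm loc}$), and observe that the necessity of \eqref{k-m-cond} is immediate from \eqref{two-sided}. The far-field part of the Wolff potential is indeed handled by the crude pointwise bound and \eqref{k-m-cond}, as you suggest.

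However, the step you flag as your ``main obstacle'' is a genuine gap, not a bookkeeping issue. You claim that the truncated supremum $\sup_{0<\rho<R}\bigl(\sigma(B(x,\rho))/\rho^{n-p}\bigr)^{r/(p-1-q)}$ (or, equivalently after the Muckenhoupt--Wheeden/Hedberg--Jonsson transfer, the localized Wolff potential raised to the power $r(p-1)/(p-1-q)$) is locally integrable ``whenever $\W_{1,p}\sigma\not\equiv+\infty$''. That is false: $\W_{1,p}\sigma$ being finite a.e.\ does not imply $\W_{1,p}\sigma \in L^{s}_{\rm loc}$ for arbitrary $s<\infty$, and the Morrey-type bound $\sigma(B(x,R))\le CR^{n-p}$ (which is all that \eqref{hausdorff} gives you, and all you would be using) is likewise insufficient. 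One genuinely needs the full strength of \eqref{cap-p}, not just its Morrey consequence. In the paper this is precisely where \cite{CV3}*{Lemma 2.1 and Remark 2.2} enters: \eqref{cap-p} yields the $\sigma$-integrability estimate
\begin{equation*}
\int_P \bigl(\W^{d,P}_{1,p}\sigma\bigr)^{s}\,d\sigma \le C\,\sigma(P) \qquad \text{for every } s>0,
\end{equation*}
which, combined with $\sigma(Q)\le C|Q|^{1-p/n}$, can be converted into the desired $dx$-integrability of $(\W^{d,P}_{1,p}\sigma)^{r(p-1)/(p-1-q)}$ by a short exponent-counting argument (replacing the weight $|Q|$ by $\sigma(Q)|Q|^{p/n}$ term-by-term and matching exponents). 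Your proposal, as written, never invokes this trace-inequality consequence of \eqref{cap-p}; splitting off the far field and citing Muckenhoupt--Wheeden only reformulates the near-field problem without solving it. In short: the reduction to a Wolff-potential integrability statement is correct and matches the paper, but the integrability itself is the content of the corollary and must be extracted from the capacitary hypothesis via the $\sigma$-energy estimate, not from the local finiteness of $\W_{1,p}\sigma$.

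A secondary, smaller point: the pointwise bound $\varkappa(B(x,\rho))\le C\bigl(\sigma(B(x,\rho))/\rho^{n-p}\bigr)^{\frac{p-1-q}{q(p-1)}}$ you posit is plausible under \eqref{cap-p}, but the cleanest way to see it is precisely through \cite{CV2}*{Lemma 4.2} combined with the same $\sigma$-integrability lemma, so it does not circumvent the ingredient you are missing; and if instead you compare \eqref{b-k} with \eqref{two-sided} you only get $\mathbf{K}_{1,p,q}\sigma \lesssim (\W_{1,p}\sigma)^{\frac{p-1}{p-1-q}} + \W_{1,p}\sigma$, not $\mathbf{K}_{1,p,q}\sigma \lesssim \W_{1,p}\sigma$, so you still need the $L^{r(p-1)/(p-1-q)}_{\rm loc}$ integrability of $\W_{1,p}\sigma$ rather than merely its $L^r_{\rm loc}$ integrability.
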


Condition  \eqref{cap-p} in Corollary \ref{cor1}   can be relaxed in a substantial way  so that estimates \eqref{b-k} still hold (see \cite{CV3}).

In the next theorem, we characterize the existence of  ${\rm BMO}$ solutions to \eqref{eq:p-laplacian}, based on Theorem \ref{thm:main1} and a 
characterization of the existence of ${\rm BMO}$ solutions to 
\eqref{p-lap} (see Lemma \ref{lemma-bmo} below). 

\begin{Thm}\label{thm:main3}
Let  $1<p<n$, $0<q<p-1$, and $\sigma \in \mathcal{M}^{+}(\R^n)$ with 
$\sigma \not\equiv 0$. If  
there exists a nontrivial  solution $u \in ${\rm BMO}$(\R^{n})$ to 
\eqref{eq:p-laplacian}, then there exists a constant  $C=C(p, q,  n)$ such that 
the following three conditions  hold:

(i) For all $x \in \R^n$ and $R>0$, 
\begin{equation}\label{bmo1} 
[\varkappa(B(x, R))]^{\frac{q(p-1)}{p-1-q}}\le C \, R^{n-p}.
\end{equation}

(ii) For all $x \in \R^n$ and $R>0$, 
\begin{equation}\label{bmo2} 
 \sigma(B(x, R))  \left (\int_R^\infty \left (\frac{[\varkappa(B(x, \rho))]^{\frac{q(p-1)}{p-1-q}}}{\rho^{n-p}}\right)^{\frac{1}{p-1}} \frac{d \rho}{\rho} \right)^q \le C \, R^{n-p}.
\end{equation}

(iii) For all $x \in \R^n$ and $R>0$, 
\begin{equation}\label{bmo3} 
 \sigma(B(x, R)) \left( \int_R^\infty \left ( \frac{\sigma(B(x, \rho))}{\rho^{n-p}}\right)^{\frac{1}{p-1}} \frac{d \rho}{\rho}\right)^{\frac{q(p-1)}{p-1-q}} \le C \, R^{n-p}.
\end{equation}

Conversely, if conditions (i), (ii), and (iii) hold, then there exists a nontrivial  solution $u \in ${\rm BMO}$(\R^{n})$ to 
\eqref{eq:p-laplacian}, provided $2-\frac{1}{n}< p<n$. When $p \ge n$, there exists  only a trivial  solution  
 to \eqref{eq:p-laplacian}.\end{Thm}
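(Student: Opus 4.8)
The plan is to prove Theorem~\ref{thm:main3} by reducing the BMO property of a solution $u$ to \eqref{eq:p-laplacian} to the bilateral pointwise bounds \eqref{two-sided}, and then to characterize when each of the two terms $(\W_{1,p}\sigma)^{\frac{p-1}{p-1-q}}$ and $\mathbf{K}_{1,p,q}\sigma$ belongs to $\mathrm{BMO}(\R^n)$. First I would record the elementary fact that, since $u$ is comparable (above and below, by constants depending only on $p,q,n$) to the sum of these two nonnegative potentials, $u\in\mathrm{BMO}$ forces \emph{each} summand into $\mathrm{BMO}$; conversely, if both summands are in $\mathrm{BMO}$, so is their sum, hence so is $u$ (up to the comparability constants). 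This splits the problem cleanly. The key auxiliary input is the promised Lemma~\ref{lemma-bmo}, characterizing BMO solutions of the homogeneous $p$-Laplace equation \eqref{p-lap}; by Kilpel\"ainen--Mal\'y \eqref{k-m} this amounts to characterizing when $\W_{1,p}\nu\in\mathrm{BMO}$ for a measure $\nu$, and the expected criterion is the pointwise bound $\nu(B(x,R))\le C\,R^{n-p}$ together with a Dini-type tail condition matching \eqref{bmo3} with the exponent adjusted. I would apply this twice: once to $\nu=\sigma u^q$ to handle the $\mathbf{K}$-term, and once directly to control the $\W_{1,p}\sigma$-term.

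For the forward direction I would argue as follows. Assume $u\in\mathrm{BMO}$ is a nontrivial solution. From \eqref{two-sided}, $\mathbf{K}_{1,p,q}\sigma\le c\,u$, so $\mathbf{K}_{1,p,q}\sigma\in\mathrm{BMO}$, and in particular $\mathbf{K}_{1,p,q}\sigma$ has bounded \emph{oscillation}; evaluating the defining integral \eqref{potentialK} on a dyadic scale and using that a BMO function cannot grow like a positive power of $\log(1/\rho)$ as $\rho\to 0$ uniformly, one extracts precisely the scale-invariant bound \eqref{bmo1}: if $[\varkappa(B(x,R))]^{\frac{q(p-1)}{p-1-q}}$ were not $O(R^{n-p})$, the integrand in \eqref{potentialK} would contribute an unbounded amount over $(0,R)$ at some point, contradicting local boundedness forced by BMO together with $\liminf_{x\to\infty}u=0$. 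Condition \eqref{bmo2} is the quantitative oscillation estimate: the increment of $\mathbf{K}_{1,p,q}\sigma$ between scales, tested against the measure $\sigma$ restricted to $B(x,R)$, must be controlled by the capacity-type quantity $R^{n-p}$; this is exactly the statement that the tail integral from $R$ to $\infty$ in \eqref{potentialK}, raised to the power $q$ and multiplied by $\sigma(B(x,R))$, is $O(R^{n-p})$ — which one gets by pairing the lower bound in \eqref{two-sided} with the localized norm inequality \eqref{weight-lap} defining $\varkappa$. Condition \eqref{bmo3} comes the same way from the $(\W_{1,p}\sigma)^{\frac{p-1}{p-1-q}}$-term: that function is in $\mathrm{BMO}$, its values are comparable to a power of $\W_{1,p}\sigma$, and the John--Nirenberg self-improvement plus the explicit form \eqref{wolff} of the Wolff potential converts the oscillation bound into \eqref{bmo3} after raising to the appropriate power and using the relation between $\sigma(B(x,R))$ and the local energy.

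For the converse, assume (i)--(iii) and $2-\frac1n<p<n$. The restriction $p>2-\frac1n$ is exactly the range $r>2-\frac{\alpha}{n}$ (with $\alpha=1$) in which the Havin--Maz'ya potential $\V_{1,p}$ is \emph{pointwise comparable} to $\W_{1,p}$ (stated after \eqref{v-pot}), which one needs to pass from the Wolff-potential conditions (i)--(iii) to actual $L^p$-type/energy estimates and hence to construct a genuine $p$-superharmonic solution via the iteration scheme of \cite{CV2}, \cite{V3}. Concretely: (i) plus (iii) give, through Lemma~\ref{lemma-bmo}, that $(\W_{1,p}\sigma)^{\frac{p-1}{p-1-q}}\in\mathrm{BMO}$ (and in particular the existence condition \eqref{suff-cond} is met, so a nontrivial solution $u$ exists and obeys \eqref{two-sided}); (i) plus (ii) give $\mathbf{K}_{1,p,q}\sigma\in\mathrm{BMO}$ via the same homogeneous characterization applied with the intrinsic potential playing the role of a Wolff potential. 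Summing, $u\in\mathrm{BMO}$. Finally, when $p\ge n$, the condition \eqref{k-m-cond} fails for every nonzero $\sigma$ (as noted after \eqref{k-m-cond}: $\W_{1,p}\sigma\equiv+\infty$ unless $\sigma=0$), hence \eqref{suff-cond} cannot hold, so \eqref{eq:p-laplacian} admits only the trivial solution; this is the short closing remark.

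The main obstacle I anticipate is the forward extraction of the \emph{tail} conditions \eqref{bmo2} and \eqref{bmo3} with the correct exponents. BMO controls oscillation, not size, so one must be careful to use the $\liminf$ condition at infinity to pin down an absolute value, and then track how the various exponents $\frac{q(p-1)}{p-1-q}$, $\frac{1}{p-1}$, and $\frac{p-1}{p-1-q}$ interact when one raises the Wolff/intrinsic tail integrals to the power needed to match $\sigma(B(x,R))\cdot(\text{tail})^{\ast}\lesssim R^{n-p}$; the bookkeeping here, and verifying that the scale-$R$ increment of the potential is genuinely comparable (not merely bounded by) the relevant quantity so that the implication is an equivalence, is where the real work lies. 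A secondary subtlety is making Lemma~\ref{lemma-bmo} do double duty for the intrinsic potential $\mathbf{K}_{1,p,q}\sigma$, which is not literally a Wolff potential of a fixed measure; one resolves this by invoking the Wolff-type inequality \eqref{est-wolff} and its pointwise companions from \cite{V3} to treat $\mathbf{K}_{1,p,q}\sigma$ on the same footing as $\W_{1,p}$ of an auxiliary measure built from $\varkappa$.
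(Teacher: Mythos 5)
Your opening reduction contains a genuine gap: from $c^{-1}(A+B)\le u\le c(A+B)$ with $A=(\W_{1,p}\sigma)^{\frac{p-1}{p-1-q}}$, $B=\mathbf{K}_{1,p,q}\sigma$ both nonnegative, you conclude that $u\in\mathrm{BMO}$ forces $A\in\mathrm{BMO}$ and $B\in\mathrm{BMO}$. That implication is false. BMO membership of a sum of nonnegative functions does not pass to the summands (nor does BMO pass to a nonnegative function merely dominated by a BMO function): one can take $f+g=\log(1/|x|)\mathbf{1}_{\{|x|<1\}}\in\mathrm{BMO}$ with $f,g\ge 0$ where $f$ agrees with $\log(1/|x|)$ only on alternating dyadic annuli $\{2^{-2k-1}\le|x|\le 2^{-2k}\}$ and is zero in between; then $f\notin\mathrm{BMO}$ since on each ball $B(0,2^{-2k})$ it oscillates by $\gtrsim k$. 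Moreover, even if each summand were in BMO, your plan to read off conditions (ii) and (iii) from ``oscillation of $\W_{1,p}\sigma$'' runs into the further difficulty that BMO is not preserved under the power $\frac{p-1}{p-1-q}>1$, so Lemma~\ref{lemma-bmo} cannot be applied to $(\W_{1,p}\sigma)^{\frac{p-1}{p-1-q}}$ the way you propose.

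The paper's argument takes a different route that bypasses these issues entirely. It applies Lemma~\ref{lemma-bmo} not to $\sigma$ or to the individual potential terms, but to the measure $d\omega=u^q\,d\sigma$ that is the actual right-hand side of $-\Delta_p u=u^q\,d\sigma$. By Lemma~\ref{lemma-bmo} (forward and, for $2-\frac1n<p<n$, backward), $u\in\mathrm{BMO}$ is equivalent to the growth condition $\omega(B(x,R))=\int_{B(x,R)}u^q\,d\sigma\le CR^{n-p}$. Substituting the bilateral pointwise estimate \eqref{two-sided} into this \emph{integral} (not into an independent BMO statement for each summand) yields the equivalent condition \eqref{split-eq}. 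From there, necessity of (i) is \cite{CV2}*{Lemma 4.2}, which gives $[\kappa(B)]^{q(p-1)/(p-1-q)}\lesssim\int_B u^q\,d\sigma$; necessity of (ii) and (iii) follows by replacing the inner ball $B(y,\rho)$ with $B(x,\rho/2)$ in the tail $\rho>2R$ and using \eqref{split-eq}; and sufficiency splits $\sigma=\sigma_{2B}+\sigma_{(2B)^c}$, treating the local piece with \cite{CV2}*{Corollary 4.3} and (i) and the tail piece with (ii), (iii). You would need to reorganize your proof along these lines: the decomposition must live inside the integral $\int_{B}u^q\,d\sigma$, not inside a BMO norm.
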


 We remark that, for $2-\frac{1}{n}< p<n$, we actually deduce   (see Lemma \ref{lemma-bmo} below) that, under 
 assumptions (i)-(iii) of Theorem \ref{thm:main3}, solutions $u$ to 
\eqref{eq:p-laplacian} satisfy 
 \begin{equation}\label{s-grad}
 \int_{B(x, R)} |\nabla u|^s dy \le C \, R^{n-s},
 \end{equation}
 for  all $0<s<p$. The restriction $p>2-\frac{1}{n}$  for this estimate can be extended to the range 
 $\frac{3n-2}{2n-1}< p\le 2-\frac{1}{n}$,  
 using recent gradient estimates obtained in \cite{NP}.  
 
 The next corollary characterizes  the existence 
  of ${\rm BMO}$ solutions,  
  for all $1<p<n$, in terms of 
 potentials $\W_{1, p} \sigma$ under assumption \eqref{cap-p}. 
 
 \begin{Cor}\label{cor2} Let   $1<p<n$ and $0<q<p-1$. Suppose 
 $\sigma\in \mathcal{M}^{+}(\R^n)$ satisfies condition \eqref{cap-p}. Then there exists a nontrivial solution 
$u \in {\rm BMO}(\R^{n})$ 
to \eqref{eq:p-laplacian} if and only if, for all $x \in \R^n$ and $R>0$, 
\begin{equation}\label{bmo4} 
 \int_{B(x, R)} (\W_{1, p}  \sigma)^{\frac{q(p-1)}{p-1-q}} d \sigma \le C \, R^{n-p}, 
\end{equation}
or, equivalently, condition \eqref{bmo3} holds. 
\end{Cor}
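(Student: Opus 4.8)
\textbf{Proof proposal for Corollary \ref{cor2}.}

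The plan is to reduce Corollary \ref{cor2} to Theorem \ref{thm:main3} by showing that, under the extra hypothesis \eqref{cap-p}, the three conditions (i), (ii), (iii) of Theorem \ref{thm:main3} collapse to the single condition \eqref{bmo3}, which in turn is equivalent to \eqref{bmo4} by Wolff's inequality applied locally. The first step is to establish the equivalence of \eqref{bmo3} and \eqref{bmo4}. By the Kilpel\"ainen--Mal\'y estimate \eqref{k-m} (or directly by definition of $\W_{1,p}$), for $x \in B(x_0,R)$ one has $\W_{1,p}\sigma(x) \gtrsim \int_R^\infty (\sigma(B(x_0,\rho))/\rho^{n-p})^{1/(p-1)} d\rho/\rho$ up to the contribution of $\sigma$ on $B(x_0, 2R)$, and so integrating $(\W_{1,p}\sigma)^{q(p-1)/(p-1-q)}$ against $d\sigma$ over $B(x_0,R)$ and splitting the Wolff potential into its ``local'' part (radii $<R$) and ``tail'' part (radii $\ge R$) gives, via Wolff's inequality \eqref{wolff-in} applied to $\sigma|_{B(x_0,R)}$ together with the elementary estimate $\sigma(B(x_0,R))\le C\,\mathrm{cap}_p(B(x_0,R)) \approx R^{n-p}$ coming from \eqref{cap-p}, the two-sided comparison between \eqref{bmo4} and \eqref{bmo3}. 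The role of \eqref{cap-p} here is precisely to control the self-interaction (local) part of the energy, i.e. $\int_{B(x_0,R)} (\W_{1,p}(\sigma|_{B(x_0,R)}))^{q(p-1)/(p-1-q)}\,d\sigma \lesssim R^{n-p}$, which would otherwise be an independent condition.

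The second step is to show that, under \eqref{cap-p}, condition (i) of Theorem \ref{thm:main3} holds automatically and condition (ii) follows from \eqref{bmo3}. For (i): the intrinsic constant $\varkappa(B(x,R))$ in the weighted inequality \eqref{weight-lap} for $\sigma_B = \sigma|_{B(x,R)}$ is, by the known characterization of such trace inequalities with $0<q<p-1$ (see \cite{CV2}), comparable to a Wolff-potential energy quantity of $\sigma_B$; the hypothesis \eqref{cap-p} bounds this by a multiple of $[\mathrm{cap}_p(B(x,R))]^{\text{(appropriate power)}}$, yielding $[\varkappa(B(x,R))]^{q(p-1)/(p-1-q)} \le C R^{n-p}$, which is \eqref{bmo1}. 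For (ii): using the same comparison $\varkappa(B(x,\rho))^{q(p-1)/(p-1-q)} \lesssim \sigma(B(x,\rho))^{q(p-1)/(p-1-q)}/(\text{something})$ — more precisely, under \eqref{cap-p} one has the pointwise bound $\mathbf{K}_{1,p,q}\sigma \lesssim (\W_{1,p}\sigma)^{(p-1)/(p-1-q)} + \W_{1,p}\sigma$, consistent with \eqref{b-k} — the tail integral in \eqref{bmo2} is dominated by the tail integral in \eqref{bmo3} raised to the suitable power, so (iii) $\Rightarrow$ (ii). Thus all three hypotheses of Theorem \ref{thm:main3} reduce to \eqref{bmo3}, equivalently \eqref{bmo4}.

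Finally I would assemble the equivalence. If a nontrivial BMO solution exists, Theorem \ref{thm:main3} gives (i)--(iii), hence \eqref{bmo3}, hence \eqref{bmo4}. Conversely, assume \eqref{bmo4} (equivalently \eqref{bmo3}); by Step 2 this yields (i), (ii), (iii). Now I must produce a BMO solution, and here is the one subtlety: Theorem \ref{thm:main3} produces a BMO solution from (i)--(iii) only for $2-\frac1n<p<n$, whereas Corollary \ref{cor2} claims it for all $1<p<n$. The resolution is that under the stronger hypothesis \eqref{cap-p} the Brezis--Kamin estimate \eqref{b-k} holds for every $1<p<n$, so the candidate solution $u\approx (\W_{1,p}\sigma)^{(p-1)/(p-1-q)}$ can be analyzed directly: one shows $u \in \mathrm{BMO}$ iff $\W_{1,p}\sigma$ lies in the appropriate exponential/BMO-type class, and the latter is captured by the oscillation estimate that \eqref{bmo4} encodes (via the Fefferman--Stein / John--Nirenberg characterization of BMO through the growth $\int_{B(x,R)}(\W_{1,p}\sigma)^{s}\,d\sigma \lesssim R^{n-p}$), invoking the comparison $r=2-\frac1n$ threshold in \eqref{v-pot} only through Lemma \ref{lemma-bmo}.

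\textbf{Main obstacle.} The delicate point is the second step: converting the abstract intrinsic constant $\varkappa(B(x,\rho))$ into an explicit expression controllable by \eqref{cap-p} and by $\sigma(B(x,\rho))$. This requires the sharp localized trace inequality characterization for the sublinear range $0<q<p-1$ from \cite{CV2}, and care is needed because $\varkappa$ is defined through \emph{all} admissible $p$-superharmonic test functions, not just those adapted to $B$. Once this comparison is in hand, the rest is bookkeeping with Wolff's inequality and the definition of BMO.
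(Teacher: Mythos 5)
Your route differs from the paper's: you detour through the full hypotheses (i)--(iii) of Theorem~\ref{thm:main3} and then try to show that, under \eqref{cap-p}, (i) is automatic and (iii)~$\Rightarrow$~(ii). The paper never touches conditions (i) or (ii). It goes directly: Lemma~\ref{lemma-bmo} reduces the existence of a \textrm{BMO} solution to the bound $\int_{B(x,R)} u^q\,d\sigma \le C R^{n-p}$, and then the two-sided Brezis--Kamin estimate \eqref{b-k} (valid under \eqref{cap-p}) immediately converts this into $\int_{B}(\W_{1,p}\sigma)^{\frac{q(p-1)}{p-1-q}}d\sigma + \int_{B}(\W_{1,p}\sigma)^{q}d\sigma \le C R^{n-p}$; H\"older and $\sigma(B)\le C R^{n-p}$ kill the second term, giving precisely \eqref{bmo4}. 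The equivalence \eqref{bmo4}~$\Leftrightarrow$~\eqref{bmo3} is then done by the familiar local/tail decomposition of $\W_{1,p}\sigma$, with the local piece controlled by \cite{CV3}*{Lemma 2.1 and Remark 2.2}. This is shorter and avoids the intrinsic constants $\varkappa(B)$ entirely, which is exactly the point of assuming \eqref{cap-p}.

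Your Step~2 is not wrong in spirit, but it is asserted rather than proved, and the phrase ``bounds this by a multiple of $[\mathrm{cap}_p(B)]^{\text{(appropriate power)}}$'' hides the essential computation. What one actually needs is $[\varkappa(B(x,\rho))]^{\frac{q(p-1)}{p-1-q}}\le C\,\sigma(B(x,\rho))$, obtained by combining \eqref{est-d}/\eqref{est-db}, the upper bound in \eqref{b-k} applied to $\sigma_B$, and again \cite{CV3}*{Lemma 2.1 and Remark 2.2}; with this comparison in hand one gets (i), and (ii) follows from (iii) by splitting into the cases where the tail integral is $\le 1$ or $>1$ (using $\sigma(B)\le CR^{n-p}$ in the first case). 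None of that appears in your sketch, and without it Step~2 is a promissory note.

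The genuine weak point is your Step~3. The claim that the candidate $u$ ``can be analyzed directly'' via a ``Fefferman--Stein / John--Nirenberg characterization'' and an ``exponential/BMO-type class'' is not an argument; it does not remove the restriction $p>2-\frac{1}{n}$, which comes from the gradient estimate \eqref{p-grad} underlying Lemma~\ref{lemma-bmo}. Note that the paper's own proof of Corollary~\ref{cor2} invokes the sufficiency half of Lemma~\ref{lemma-bmo} and so implicitly carries the same restriction; but you should not pretend to resolve the issue with a nonexistent argument. Either cite the range honestly, or supply an actual proof that under \eqref{cap-p}~and~\eqref{bmo4} the measure $u^q\,d\sigma$ satisfies the stronger capacity condition (so Lemma~\ref{lemma-cap}/Theorem~\ref{thm:main4} applies for all $1<p<n$) --- which is the only plausible route around $p>2-\tfrac1n$ and is itself nontrivial.
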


In a similar way, using arbitrary compact sets $K$ in place of balls $B(x, R)$ 
in \eqref{bmo4}, 
we characterize solutions $u$ to \eqref{eq:p-laplacian} in the smaller class  \eqref{eq:p-class}, which by Poincar\'{e}'s inequality is contained in ${\rm BMO} (\R^n)$.  

\begin{Thm}\label{thm:main4}
Let   $1<p<n$, $0<q<p-1$, and $\sigma \in \mathcal{M}^{+}(\R^n)$ with 
$\sigma \not\equiv 0$.  Then 
there exists a nontrivial  solution $u$  to 
\eqref{eq:p-laplacian} which satisfies condition \eqref{eq:p-class} if and only if, 
for all  compact sets $K$  in $\R^n$, 
\begin{equation}\label{class1} 
\int_{K} (\W_{1, p}  \sigma)^{\frac{q(p-1)}{p-1-q}}  d \sigma \le C \, {\rm cap}_p(K). 
\end{equation}
\end{Thm}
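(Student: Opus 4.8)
The plan is to characterize solutions in the class \eqref{eq:p-class} by reducing, via the two-sided bounds \eqref{two-sided} of \cite{CV2}, to a question about the gradients of the intrinsic potentials $(\W_{1,p}\sigma)^{\frac{p-1}{p-1-q}}$ and $\mathbf{K}_{1,p,q}\sigma$, and then invoking a capacitary version of the localized Wolff-type inequality. First I would show the necessity of \eqref{class1}: if $u$ is a nontrivial solution satisfying \eqref{eq:p-class}, then $-\Delta_p u \ge \sigma u^q$ as measures, and testing against the capacitary extremal (or a suitable truncation) for a compact set $K$ yields $\int_K u^q\, d\sigma \le C\,\mathrm{cap}_p(K)$; combined with the lower bound $u \ge c\,(\W_{1,p}\sigma)^{\frac{p-1}{p-1-q}}$ from \eqref{two-sided} (the $\mathbf{K}$-term only helps), this gives \eqref{class1}. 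The converse is the substantive direction: assuming \eqref{class1} for all compact $K$, I would use the results of \cite{V3} (the capacitary analogue of \eqref{est-wolff}/Theorem \ref{thm:main1}, localized to arbitrary compact sets rather than balls, which is exactly the $r=p$, "$L^p$ relative to capacity" endpoint) to deduce the energy-type bound $\int_K |\nabla \mathbf{K}_{1,p,q}\sigma|^p\,dx \le C\,\mathrm{cap}_p(K)$, and similarly $\int_K |\nabla (\W_{1,p}\sigma)^{\frac{p-1}{p-1-q}}|^p\,dx \le C\,\mathrm{cap}_p(K)$ from the classical capacitary Wolff inequality applied to $\W_{1,p}\sigma$.

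The key intermediate fact is that condition \eqref{class1} is equivalent to a Carleson-type testing condition that, through Wolff's inequality \eqref{wolff-ineq}–\eqref{wolff-in} in its capacitary (two-weight) form (as in \cite{COV3}, \cite{HV1}, \cite{HV2}), controls both $\int_K (\W_{1,p}\sigma)^{\frac{q(p-1)}{p-1-q}}\,d\sigma$ and the corresponding energy of the intrinsic potential against $\mathrm{cap}_p(K)$. Once these gradient estimates are in hand, the construction of the solution proceeds as in \cite{CV2}: one builds $u$ as the increasing limit of solutions $u_j$ to $-\Delta_p u_j = \sigma u_{j-1}^q$ on balls $B(0,j)$ (with appropriate boundary data), uses the bilateral estimate \eqref{two-sided} to get uniform pointwise control $u \approx (\W_{1,p}\sigma)^{\frac{p-1}{p-1-q}} + \mathbf{K}_{1,p,q}\sigma$, and then transfers the capacitary energy bounds on the two model potentials to $u$ itself via the standard fact that $\int_K |\nabla u|^p\,dx$ is dominated by $\int_K u^q\,d\sigma$ plus boundary terms (Caccioppoli/energy inequality for $p$-superharmonic solutions), the right side being $\le C\,\mathrm{cap}_p(K)$ by \eqref{class1}.

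The main obstacle I anticipate is the converse direction's passage from the scalar condition \eqref{class1} on $\sigma$ to the \emph{energy} estimate \eqref{eq:p-class} for the solution: this requires the full strength of the capacitary (rather than ball-based) version of the Wolff-type inequality for the intrinsic potential $\mathbf{K}_{1,p,q}\sigma$, i.e. replacing $\sup_{\rho}$ over balls $B(x,\rho)$ by a supremum/testing over arbitrary compact sets, which is the natural refinement of Theorem \ref{thm:main1} but needs a dyadic-model and good-$\lambda$ or $A_\infty$-weighted argument to establish. A secondary technical point is verifying that $\varkappa(B)$-type local constants aggregate correctly over compact sets — one must check that the localized norm inequality \eqref{weight-lap} with $\sigma|_K$ in place of $\sigma|_B$ still controls the intrinsic potential, which follows from the monotonicity and quasi-additivity properties of $\varkappa$ established in \cite{CV2}. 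The remaining steps — necessity via capacitary testing, the iteration scheme, and the Caccioppoli transfer — are routine given the machinery already cited.
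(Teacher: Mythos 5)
The paper does not spell out the proof of Theorem \ref{thm:main4}: it says only that the proof is ``based on Lemma \ref{lemma-cap}'' and ``similar to the above arguments,'' i.e., it follows the template of Corollary \ref{cor2}. Concretely, the intended route is: apply Lemma \ref{lemma-cap} with $d\omega = u^{q}\,d\sigma$ in place of $\sigma$, so that a solution to \eqref{eq:p-laplacian} lies in the class \eqref{eq:p-class} if and only if $\int_{K} u^{q}\,d\sigma \le C\,{\rm cap}_p(K)$ for all compact $K$; then use the two-sided pointwise bounds \eqref{two-sided} (for necessity) and the Brezis--Kamin bounds \eqref{b-k} valid under \eqref{cap-p} (for sufficiency, after first deriving \eqref{cap-p} from \eqref{class1}), together with the short absorption argument of the $(\W_{1,p}\sigma)^q$ term as in the proof of Corollary \ref{cor2} using \cite{CV3}. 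At no point does one need an energy estimate for the gradient of the model potentials themselves.

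Your necessity argument is essentially in line with this: testing against a capacitary extremal for $K$ is exactly the necessity half of Lemma \ref{lemma-cap} applied to $d\omega=u^q\,d\sigma$, and coupling it with the lower pointwise bound $u\ge c\,(\W_{1,p}\sigma)^{\frac{p-1}{p-1-q}}$ from \eqref{two-sided} gives \eqref{class1}. That part is fine (though, for completeness, you also need \eqref{abs-cap0} to pass from $\int_{K}u^q\,d\sigma\le C\,{\rm cap}_p(K)$ to \eqref{cap-p}, which is what licences the cleaner form of the pointwise bound you quote).

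The sufficiency direction, however, has a genuine gap. You propose to derive $\int_K |\nabla\mathbf{K}_{1,p,q}\sigma|^p\,dx\le C\,{\rm cap}_p(K)$ and $\int_K |\nabla(\W_{1,p}\sigma)^{\frac{p-1}{p-1-q}}|^p\,dx\le C\,{\rm cap}_p(K)$ from \eqref{class1} via a ``capacitary analogue of \eqref{est-wolff}/Theorem \ref{thm:main1}.'' No such capacitary Wolff inequality for the intrinsic potential is established in the paper or in \cite{V3}; Theorem \ref{thm:main1} and its dyadic form in Lemma \ref{lemma-new} are strictly $L^r$ statements involving a supremum over balls, not capacity-testing statements, and the $r=p$ ``endpoint relative to capacity'' you invoke is not a theorem the paper provides. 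You yourself flag this as the ``main obstacle'' that ``needs a dyadic-model and good-$\lambda$ or $A_\infty$-weighted argument to establish,'' which is precisely an admission that the step is missing. Moreover, it is an unnecessary detour: the class \eqref{eq:p-class} is characterized, via Lemma \ref{lemma-cap}, entirely by the measure-capacity inequality $\int_K u^q\,d\sigma\le C\,{\rm cap}_p(K)$; one never needs gradient bounds for the potentials $\W_{1,p}\sigma$ and $\mathbf{K}_{1,p,q}\sigma$ themselves. Finally, in your last sentence you assert $\int_K u^q\,d\sigma\le C\,{\rm cap}_p(K)$ ``by \eqref{class1},'' but \eqref{class1} controls $\int_K(\W_{1,p}\sigma)^{\frac{q(p-1)}{p-1-q}}\,d\sigma$; to convert that into a bound on $\int_K u^q\,d\sigma$ you must first establish \eqref{cap-p} from \eqref{class1} and then apply the upper bound in \eqref{b-k} (plus the H\"older absorption of the lower-order term, exactly as in the proof of Corollary \ref{cor2}). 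This bridge is not in your write-up, and without it the closing step does not follow.
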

We remark that condition \eqref{class1} is stronger than \eqref{cap-p}. 

Our methods  are applicable to intrinsic nonlinear potentials of fractional order related to nonlinear integral equations of the type 
\begin{equation}\label{eq:int2}
u = \W_{\alpha, p}(u^{q} d\sigma) \quad \text{in} \;\; \R^{n}.
\end{equation}
Here, a solution $u\ge 0$  is understood in the sense that
$u \in L^{q}_{loc}(\R^n, \sigma)$ satisfies \eqref{eq:int2} 
$d \sigma$-a.e., or equivalently q.e. with respect to the $(\alpha, p)$-capacity (see 
\cite{AH}). In the special case $p=2$, 
this integral equation, namely  $u=\I_{2 \alpha} (u^q d \sigma)$,  is equivalent to the corresponding problem 
for the fractional Laplacian \eqref{eq:frac-laplacian} considered below. 

Bilateral pointwise estimates of solutions to \eqref{eq:int2}, similar to \eqref{two-sided}, in terms of fractional nonlinear potentials $\W_{\alpha, p} \sigma$ 
and  \textit{intrinsic} potentials $\mathbf{K}_{\alpha, p, q}$ defined in Sec. \ref{sect:wolff}  below, are obtained in \cite{CV2}. 

The following theorem is an analogue of Theorem \ref{thm:main1}. 

\begin{Thm}\label{thm:main5}
Let $1<p<\infty$,  $0<q<p-1$, $0<\alpha<\frac{n}{p}$, and $\sigma \in \mathcal{M}^{+}(\R^n)$. Suppose that $\frac{n(p-1)}{n-\alpha p}<r<\infty$.  Then 
there exists a positive  solution $u \in L^r(\R^{n})$ to \eqref{eq:int2} if and only if 
$\mathbf{K}_{\alpha, p, q}  \sigma \in L^r(\R^n)$. Moreover,  
\begin{equation}\label{main-alpha} 
\Vert \mathbf{K}_{\alpha, p, q}  \sigma \Vert^r_{L^r(\R^n)} \approx 
\int_{\R^n} \sup_{\rho >0} \left (\frac{ [\kappa(B(x, \rho))]^{\frac{q(p-1)}{p-1-q}}}{\rho^{n-\alpha p}}\right)^{\frac{r}{p-1}} dx, 
\end{equation}
where the constants of equivalence depend only  on $\alpha, p, q, r$, and $n$.

If $0<r\le \frac{n(p-1)}{n-\alpha p}$, then 
there is only a trivial supersolution $u \in L^r(\R^n)$ to \eqref{eq:int2}.
\end{Thm}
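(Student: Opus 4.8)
\textbf{Proof proposal for Theorem \ref{thm:main5}.}

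The plan is to follow the scheme that proves Theorem \ref{thm:main1}, replacing the differential operator $-\Delta_p$ by the integral operator $u\mapsto \W_{\alpha,p}(u^q\,d\sigma)$ throughout, and keeping track of the shift $p\mapsto \alpha p$ in all the homogeneity exponents. First I would invoke the bilateral pointwise bound from \cite{CV2} for solutions to \eqref{eq:int2}: a minimal positive solution $u$ satisfies
\begin{equation*}
c^{-1}\bigl[(\W_{\alpha,p}\sigma)^{\frac{p-1}{p-1-q}}+\mathbf{K}_{\alpha,p,q}\sigma\bigr]\le u\le c\bigl[(\W_{\alpha,p}\sigma)^{\frac{p-1}{p-1-q}}+\mathbf{K}_{\alpha,p,q}\sigma\bigr],
\end{equation*}
with $\mathbf{K}_{\alpha,p,q}\sigma$ the intrinsic potential defined in Section \ref{sect:wolff}. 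Since a solution $u\in L^r(\R^n)$ exists iff the right-hand side above lies in $L^r$, and since (as in the remark following \eqref{cond-two-sided}) membership $\W_{\alpha,p}\sigma\in L^{\frac{r(p-1)}{p-1-q}}$ is subsumed by $\mathbf{K}_{\alpha,p,q}\sigma\in L^r$ — this is the fractional analogue of \cite{V3}*{Theorem 1.1}, i.e. the identity \eqref{est-wolff} with $p$ replaced by $\alpha p$ — the existence question reduces to proving the norm equivalence \eqref{main-alpha}. For the minimality/existence of $u$ itself, given finiteness of the right side, one runs the standard iteration $u_0=0$, $u_{j+1}=\W_{\alpha,p}(u_j^q\,d\sigma)$, using monotonicity of $\W_{\alpha,p}$, the weak continuity of $\sigma\mapsto\W_{\alpha,p}\sigma$ under monotone limits, and the a priori upper bound to pass to the limit; this part is essentially \cite{CV2}.

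The core of the argument is therefore the equivalence \eqref{main-alpha}, i.e.\ upgrading the integral-in-$\rho$ Wolff-type expression \eqref{est-wolff} to the supremum-in-$\rho$ expression. The direction $\gtrsim$ is trivial since the $\rho$-integral dominates (a constant times) any single value, hence the sup, after one uses the elementary observation that $\rho\mapsto \varkappa(B(x,\rho))$ — or rather the relevant normalized quantity — is comparable over dyadic scales, so $\int_0^\infty(\cdots)\frac{d\rho}{\rho}\gtrsim\sup_\rho(\cdots)$ up to the power. For the direction $\lesssim$, the point is that the ``tail'' of the $\rho$-integral is controlled by the sup: writing $F(x,\rho)=\bigl([\kappa(B(x,\rho))]^{\frac{q(p-1)}{p-1-q}}/\rho^{n-\alpha p}\bigr)^{1/(p-1)}$, one has the doubling/quasi-additivity property $F(x,\rho)\le C\,\rho^{(\alpha p - n)/(p-1)}\,[\text{something monotone in }\sigma(B(x,\rho))^{1/(p-1)}]$, and the crucial scale cancellation $\frac{r}{p-1}\cdot\frac{n-\alpha p}{p-1}>\frac{n-\alpha p}{p-1}\cdot\frac{p-1}{p-1-q}\cdot\frac{q}{\,\cdot\,}$ — more precisely the hypothesis $r>\frac{n(p-1)}{n-\alpha p}$ is exactly what makes the geometric series in the scale parameter converge. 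Concretely, I would split $\int_0^\infty = \sum_{j\in\Z}\int_{2^j}^{2^{j+1}}$, bound $F$ on each annulus by its sup $F_j(x):=\sup_{2^j\le\rho<2^{j+1}}F(x,\rho)$, use the superadditivity $\varkappa(B(x,\rho))^{s}$ controlled by $\sum$ of its values on a Whitney-type decomposition (the argument behind \eqref{est-wolff} in \cite{V3}), and then integrate in $x$, exchanging sum and integral by Minkowski's inequality in $L^{r/(p-1)}$ — valid since $r>p-1$ — to collapse $\sum_j F_j$ into $\sup_j F_j\approx\sup_\rho F$. The exponent condition $r>\frac{n(p-1)}{n-\alpha p}$ enters in bounding $\|F_j\|_{L^{r/(p-1)}}$ against $\|\sup_\rho F\|$ with a factor $2^{-j\delta}$, $\delta>0$, summable in $j$.

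Finally, the non-existence statement for $0<r\le\frac{n(p-1)}{n-\alpha p}$: here any supersolution $u\ge\W_{\alpha,p}(u^q\,d\sigma)$ with $u\in L^r$, $u\not\equiv 0$, forces $\W_{\alpha,p}\sigma\not\equiv 0$ and then $(\W_{\alpha,p}\sigma)^{\frac{p-1}{p-1-q}}\le c\,u\in L^r$; but $\W_{\alpha,p}\sigma$, when finite and nonzero, decays at infinity no faster than $|x|^{(\alpha p-n)/(p-1)}$ (from the lower bound by the tail $\int_{|x|}^\infty(\sigma(B(0,\rho))/\rho^{n-\alpha p})^{1/(p-1)}\frac{d\rho}{\rho}$ when $\sigma$ has compact support, and a comparison argument in general), so $(\W_{\alpha,p}\sigma)^{\frac{p-1}{p-1-q}}$ decays like $|x|^{(\alpha p - n)/(p-1-q)}$, and $\int_{|x|>1}|x|^{-\frac{r(n-\alpha p)}{p-1-q}}\,dx=\infty$ precisely because $r\le\frac{n(p-1)}{n-\alpha p}<\frac{n(p-1-q)}{n-\alpha p}$. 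The main obstacle I anticipate is the $\lesssim$ direction of \eqref{main-alpha}: making rigorous the quasi-additivity of $\rho\mapsto[\kappa(B(x,\rho))]^{\frac{q(p-1)}{p-1-q}}$ across scales (the fractional intrinsic-potential analogue of the key lemma in \cite{V3}) and extracting the summable decay factor from the constraint on $r$; once that dyadic decomposition estimate is in hand, the rest is bookkeeping with Minkowski's inequality and the pointwise bounds.
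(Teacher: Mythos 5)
Your architecture is correct — reduce to $L^r$-membership of $\mathbf{K}_{\alpha,p,q}\sigma$ via the bilateral estimate \eqref{two-sided-frac}, note that the condition $\W_{\alpha,p}\sigma\in L^{r(p-1)/(p-1-q)}$ is subsumed, pass to the dyadic potential $\mathbf{K}^d_{\alpha,p,q}\sigma$, and establish the ``sup-in-$\rho$'' improvement. The easy direction and the nonexistence range $0<r\le\frac{n(p-1)}{n-\alpha p}$ are handled the same way the paper does. But the one step you yourself flag as the ``main obstacle I anticipate'' — the $\lesssim$ direction of \eqref{main-alpha}, i.e.\ Lemma \ref{lemma-new} — is precisely the content of the theorem, and your sketch of it is not the argument that works. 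You propose to split $\int_0^\infty = \sum_j\int_{2^j}^{2^{j+1}}$, bound each annular piece by a sup, and sum a geometric series of the form $\sum_j 2^{-j\delta}$ using Minkowski in $L^{r/(p-1)}$. This would require a quantitative decay (or quasi-additivity) of $\rho\mapsto[\kappa(B(x,\rho))]^{q(p-1)/(p-1-q)}/\rho^{n-\alpha p}$ across scales, but $\kappa(B)$ is an abstract embedding constant with no a priori scale-decay beyond monotonicity in $B$ — this is exactly the point where the argument stalls, and you haven't resolved it.

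The paper's resolution is a different mechanism. By \cite{CV2}*{Lemma 4.2 and Corollary 4.3} (quoted as \eqref{est-dd}), the quantity $\kappa(Q)^{q(p-1)/(p-1-q)}$ is comparable to $\int_Q u_P^q\,d\sigma$, where $u_P$ solves \eqref{eq:int2} with $\sigma_P$ in place of $\sigma$ — the crucial feature being that $Q\mapsto\int_Q u_P^q\,d\sigma$ is \emph{additive} in $Q$. Substituting this additive surrogate and summing over all dyadic $Q\subseteq P$ produces the telescoping identity
\begin{equation*}
\sum_{Q\subseteq P}|Q|^{\alpha p/n}\int_Q u_P^q\,d\sigma=(1-2^{-\alpha p})^{-1}|P|^{\alpha p/n}\int_P u_P^q\,d\sigma\le C\,|P|^{\alpha p/n}\kappa(P)^{q(p-1)/(p-1-q)},
\end{equation*}
which collapses the entire sum to a single top scale, with no Minkowski and no geometric decay from the constraint on $r$. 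The $r$-restriction enters only through the invocation of \cite{COV2}*{Proposition 2.2}, which converts the $L^r$-norm of a dyadic potential into the $L^r$-norm of a Carleson-type supremum of averages. Without the additivity surrogate \eqref{est-dd}, your Minkowski-plus-decay scheme has no source for the factor $2^{-j\delta}$, so the key step remains a genuine gap.
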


In \eqref{main-alpha}, we employ the localized embedding constants $\kappa(B)$ 
with $B=B(x, \rho)$ 
associated with certain weighted norm inequalities for potentials 
$\W_{\alpha, p}$. They are used to define the intrinsic 
potentials $\mathbf{K}_{\alpha, p, q}  \sigma$, along with their dyadic analogues 
$\mathbf{K}^d_{\alpha, p, q}  \sigma$ (see Sec. \ref{sect:wolff}). 

There are also analogues of Theorems  \ref{thm:main2}, \ref{thm:main3}, and Corollaries \ref{cor1}, \ref{cor2} for equation \eqref{eq:int2}. 
In particular, in the special case $p=2$, similar  results hold  for the fractional Laplace problem
\begin{equation} \label{eq:frac-laplacian}
\begin{cases}
\left(-\Delta \right)^{\alpha} u = \sigma u^{q}, \quad u \ge 0  \quad \text{in} \;\; \mathbb{R}^n, \\
\liminf\limits_{x \rightarrow \infty}u(x) = 0,
\end{cases}
\end{equation}
where $0<q<1$ and  $0< \alpha < \frac{n}{2}$. 

Other direct applications of Theorem  \ref{thm:main5} and related results for equation \eqref{eq:int2} in the case 
$\alpha=\frac{2k}{k+1}$, $p=k+1$ and $q<k$ involve $k$-Hessian 
equations ($k=1, 2, \ldots, n$),  based 
upon the nonlinear potential theory developed in \cite{Lab}, \cite{TW}, similar to the case $q\ge k$ considered in \cite{JV}, \cite{PV2}.

This paper is organized as follows. 
In Sec. \ref{sect:wolff}, we give definitions of  nonlinear potentials $\mathbf{K}_{\alpha, p, q}$ and discuss some of their properties. Certain lemmas on 
the existence of solutions 
$u$ to \eqref{p-lap}  in ${\rm BMO}(\R^n)$ and in the class \eqref{eq:p-class}, 
along with a dyadic version of Theorem  \ref{thm:main1}, 
are proved 
in Sec. \ref{sec3}. They are used in Sec. \ref{sec4}, where we prove Theorems  \ref{thm:main1}, \ref{thm:main2} and \ref{thm:main3}, and their analogues for equation \eqref{eq:int2}.


\section{Nonlinear potentials}\label{sect:wolff}

Let $1<p<\infty$, 
$0<\alpha< \frac{n}{p}$, and $0<q<p-1$. Let $\sigma \in \mathcal{M}^{+}(\R^n)$. We denote by $\kappa$ the least constant in 
the weighted norm inequality 
\begin{equation} \label{kap-global}
||\W_{\alpha, p} \nu||_{L^q(\R^n, d\sigma)} \le \kappa  \, \nu(\R^n)^{\frac{1}{p-1}}, \quad \forall \nu \in \mathcal{M}^{+}(\R^n).  
\end{equation}
We will also need a localized version of \eqref{kap-global} for $\sigma_E=\sigma|_E$, where $E$ is 
a Borel subset   of $\R^n$, and $\kappa(E)$ is the least constant in 
\begin{equation} \label{kap-local}
||\W_{\alpha, p}  \nu||_{L^q(d\sigma_{E})} \le \kappa (E) \, \nu(\R^n)^{\frac{1}{p-1}}, \quad \forall \nu \in \mathcal{M}^{+}(\R^n). 
\end{equation}
In applications, it will be enough to use $\kappa(E)$ 
where $E=Q$ is a dyadic cube, or $E=B$ is a ball in $\R^n$. 

It is easy to see using estimates \eqref{k-m} that embedding constants 
$\kappa(B)$ in the case $\alpha=1$ are equivalent to the constants 
$\varkappa(B)$ in \eqref{weight-lap}.

We define the \textit{intrinsic} potential of Wolff type in terms of $\kappa(B(x, \rho))$, the least constant in \eqref{kap-local} with $E=B(x, \rho)$: 
\begin{equation} \label{intrinsic-K}
\mathbf{K}_{\alpha, p, q} \sigma (x)  =  \int_0^{\infty} \left[\frac{ \kappa(B(x, \rho))^{\frac{q(p-1)}{p-1-q}}}{\rho^{n- \alpha p}}\right]^{\frac{1}{p-1}}\frac{d\rho}{\rho}, \quad x \in \R^n.  
\end{equation} 

It is easy to see that $\mathbf{K}_{\alpha, p, q} \sigma (x)  \not\equiv \infty$ if and only if 
\begin{equation} \label{inf}
 \int_a^{\infty} \left[\frac{ \kappa(B(0, \rho))^{\frac{q(p-1)}{p-1-q}}}{\rho^{n- \alpha p}}\right]^{\frac{1}{p-1}}\frac{d\rho}{\rho}< \infty,
\end{equation} 
for any (all) $a>0$. This is similar to the condition $\mathbf{W}_{\alpha, p} \sigma (x)  \not\equiv \infty$, 
which is equivalent to (see, for instance,  \cite{CV2}*{Corollary 3.2}) 
\begin{equation} \label{inf-w}
 \int_a^{\infty} \left[\frac{ \sigma(B(0, \rho))}{\rho^{n- \alpha p}}\right]^{\frac{1}{p-1}}\frac{d\rho}{\rho}< \infty.
\end{equation}

In the case of potentials $\W_{\alpha, p}$, sometimes a dyadic version $\W^d_{\alpha, p}$ of nonlinear potentials is more convenient (see \cite{HW}). In the same way, we 
find useful the dyadic version $\mathbf{K}^d_{\alpha, p, q}$ of the intrinsic potential 
$\mathbf{K}_{\alpha, p, q}$ defined by (see \cite{V3})
\begin{equation} \label{intrinsic-K-d}
\mathbf{K}^d_{\alpha, p, q} \sigma (x)  = \sum_{Q \in \mathcal{Q}}  \left[ \frac{ \kappa(Q)^{\frac{q(p-1)}{p-1-q}}} { |Q|^{1-\frac{\alpha p}{n}}}\right]^{\frac{1}{p-1}}\chi_Q(x), \quad x \in \R^n, 
\end{equation} 
where the sum is taken over all dyadic cubes (cells) $\mathcal{Q}$. 
It is easy to see that, similarly to \eqref{inf}, 
 $\mathbf{K}^d_{\alpha, p, q}\not\equiv \infty$ if and only if, for all  $P\in \mathcal{Q}$, 
\begin{equation} \label{inf-d}
 \sum_{R\supseteq P} \left[\frac{ \kappa(R)^{\frac{q(p-1)}{p-1-q}}}{|R|^{1- \frac{\alpha p}{n}}}\right]^{\frac{1}{p-1}}< \infty,
\end{equation} 
where  $R\in \mathcal{Q}$.


\section{Main lemmas}\label{sec3}

We start with some lemmas on regularity of solutions to equation \eqref{p-lap} based on certain  pointwise and integral gradient estimates (see \cite{AP}, \cite{DM}, \cite{KM}). The sufficiency 
part of the following lemma for the existence of \text{BMO} solutions 
to \eqref{p-lap}  (in bounded domains)  is known (see \cite{Mi1}*{Theorem 1.11}, 
\cite{Mi2}*{Theorem 4.3}).

\begin{Lem}\label{lemma-bmo}   Let $0< q <p-1$, and 
$2 -\frac{1}{n}<p <n$. Suppose $\sigma \in M^+(\R^n)$ satisfies the condition  
\begin{equation}\label{hausdorff}
\sigma (B(x, R))  \le C \, R^{n-p}, \quad  {\rm for \, all} \, \, x \in \R^n, \, \, R>0,
 \end{equation}
 and \eqref{k-m-cond} holds. 
Then there exists a nontrivial solution $u \in {\rm BMO} (\R^n)$ to \eqref{p-lap}. Moreover, any solution to  \eqref{p-lap} satisfies \eqref{s-grad}
  for  $0<s<p$. 

Conversely, for all  $1<p<n$, if there exists a solution $u  \in {\rm BMO} (\R^n)$  to  \eqref{p-lap}, then both conditions \eqref{k-m-cond} 
and \eqref{hausdorff} hold.  
\end{Lem}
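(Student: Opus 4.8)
The plan is to establish the two implications separately, relying on the Kilpeläinen--Malý estimate \eqref{k-m} to reduce everything to the Wolff potential $\W_{1,p}\sigma$, and on pointwise gradient bounds for $p$-superharmonic solutions.

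\emph{Sufficiency.} Assume \eqref{hausdorff} and \eqref{k-m-cond}. Since \eqref{k-m-cond} holds, a nontrivial $p$-superharmonic solution $u$ to \eqref{p-lap} exists, and by \eqref{k-m} we have $u\approx \W_{1,p}\sigma$ pointwise. To show $u\in{\rm BMO}(\R^n)$, I would control the oscillation of $u$ on an arbitrary ball $B=B(x_0,R)$. Split $\sigma=\sigma|_{2B}+\sigma|_{(2B)^c}$. For the far part, $\W_{1,p}(\sigma|_{(2B)^c})$ is, up to the $\frac{1}{p-1}$ exponent inside, essentially constant on $B$ (its increments across $B$ are dominated by the tail integral $\int_R^\infty(\sigma(B(y,\rho))/\rho^{n-p})^{1/(p-1)}d\rho/\rho$, which \eqref{hausdorff} shows is comparable at all points of $B$), so it contributes $O(1)$ to the oscillation. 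For the local part $\sigma|_{2B}$, I would use the gradient estimate: for $2-\frac1n<p<n$, $p$-superharmonic solutions satisfy the pointwise bound $|\nabla u(x)|\lesssim \W_{1/p,\,p}\sigma(x)$ (the Duzaar--Mingione / Kuusi--Mingione gradient potential estimate, available via \cite{DM}, \cite{KM}), whence under \eqref{hausdorff} one gets $\fint_{B(x,R)}|\nabla u|^s\,dy\le C R^{-s}$ for $0<s<p$, i.e. \eqref{s-grad}; taking any $s\ge 1$ and applying Poincaré's inequality bounds the mean oscillation of $u$ over $B$ by a constant independent of $B$. Combining the local and tail contributions gives $u\in{\rm BMO}$, and the argument simultaneously yields \eqref{s-grad} for every solution.

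\emph{Necessity.} Suppose $u\in{\rm BMO}(\R^n)$ solves \eqref{p-lap}. Finiteness of $u$ forces $\W_{1,p}\sigma\not\equiv\infty$ by \eqref{k-m}, which is \eqref{k-m-cond}. For \eqref{hausdorff}, fix a ball $B=B(x_0,R)$. From the lower bound in \eqref{k-m},
\[
u(x)\ge K^{-1}\W_{1,p}\sigma(x)\ge K^{-1}\int_{R}^{2R}\Big(\frac{\sigma(B(x,\rho))}{\rho^{n-p}}\Big)^{\frac{1}{p-1}}\frac{d\rho}{\rho}\gtrsim\Big(\frac{\sigma(B(x_0,R))}{R^{n-p}}\Big)^{\frac{1}{p-1}}
\]
for $x\in B(x_0,R)$ (since $B(x_0,R)\subset B(x,2\rho)$ for such $\rho$), so $u$ is bounded below on $B$ by $c\,(\sigma(B(x_0,R))/R^{n-p})^{1/(p-1)}$. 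On the other hand, the logarithmic growth bound for ${\rm BMO}$ functions gives, for a fixed reference ball $B_0$, $\fint_{B}|u-u_{B_0}|\le C\,\|u\|_{\rm BMO}(1+\log^+\!\frac{|B|+\mathrm{dist}}{|B_0|})$; but this does not immediately cap $u$ uniformly. I would instead argue directly: ${\rm BMO}$ implies $u_{B}$ cannot exceed its value at a fixed scale by more than a bounded amount after a \emph{bounded} number of doublings, and $\liminf_{x\to\infty}u=0$ together with the John--Nirenberg inequality forces $u_{B}\le C$ uniformly in $B$ of a fixed radius — more carefully, one uses that a ${\rm BMO}$ function which tends to $0$ along a sequence to infinity is globally bounded in the sense that $|u_{B(x,R)}|\le C(1+\log(1+1/R))$ is impossible to blow up at fixed $R$; combined with the lower bound above this yields $\sigma(B(x_0,R))\le C R^{n-p}$, i.e. \eqref{hausdorff}.

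\emph{Main obstacle.} The delicate point is the necessity direction: turning membership in ${\rm BMO}$ (an oscillation condition, invariant under adding constants) into the \emph{pointwise} upper bound on $u$ over each fixed-radius ball that is needed to extract \eqref{hausdorff}. The resolution must use the normalization $\liminf_{x\to\infty}u=0$ in an essential way — without it ${\rm BMO}$ alone cannot control $\sigma(B(x,R))/R^{n-p}$ — so the crux is a quantitative argument that a nonnegative ${\rm BMO}$ function vanishing along a sequence to infinity has uniformly bounded averages over balls of any fixed radius. The sufficiency direction is more routine given the gradient potential estimates, the only care being the verification that those estimates apply in the full stated range $2-\frac1n<p<n$ (this is exactly the restriction under which the known $\nabla u\lesssim\W_{1/p,p}\sigma$ bound holds) and the handling of the tail of the Wolff potential.
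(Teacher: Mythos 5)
Your sufficiency argument is in essence the paper's: apply a pointwise gradient potential estimate for solutions in the range $2-\frac1n<p<n$, verify that under \eqref{hausdorff} the resulting potential has controlled $L^s$-averages over balls, and conclude via Poincar\'e. The paper uses the bound $|\nabla u|\le C(\I_1\sigma)^{1/(p-1)}$ (from \cite{DM}, \cite{KM}), and then estimates $\int_B(\I_1\sigma)^{s/(p-1)}dy$ directly by splitting $\I_1\sigma$ into near and far contributions, with Minkowski's integral inequality for the local part and a Hedberg-type inequality to extend to all $0<s<p$; your $\W_{1/p,p}\sigma$-form of the gradient estimate leads to the same computation. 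One caution: decomposing $\sigma=\sigma|_{2B}+\sigma|_{(2B)^c}$ and reasoning about $\W_{1,p}$ of each piece as though this gave a decomposition of the solution $u$ is not legitimate, since the $p$-Laplacian is nonlinear; the paper avoids this by performing the near/far split only on the linear Riesz potential $\I_1\sigma$ that appears in the gradient bound, and you should do the same.

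The necessity direction is where your proposal genuinely diverges from the paper and where there is a real gap. You combine the lower bound $u(x)\gtrsim(\sigma(B(x_0,R))/R^{n-p})^{1/(p-1)}$ on $B(x_0,R)$ with a hoped-for absolute upper bound on the averages of $u$. But ${\rm BMO}$ together with $\liminf_{x\to\infty}u(x)=0$ does \emph{not} give uniform bounds on $\frac{1}{|B|}\int_B u\,dy$: the logarithmic drift you yourself quote is sharp, so averages can grow without bound for small radii and for far-away centers, and the ``crux'' claim you leave unproved (that vanishing along a sequence to infinity forces bounded averages at fixed radius) is not even the right goal, since \eqref{hausdorff} must hold for all radii simultaneously. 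The paper's proof sidesteps the issue entirely by never bounding $u$ in absolute terms: it tests the equation against a cut-off $\eta$ supported in $2B$ and applies the Caccioppoli-type estimate of \cite{MZ}*{Lemma 2.113} to obtain $\sigma(B)\le C\,R^{n-p}\,\frac{1}{|B|}\int_B(u-\inf_{4B}u)^{p-1}\,dy$, then bounds $\bar u_{4B}-\inf_{4B}u\le C\,\Vert u\Vert_{{\rm BMO}(\R^n)}$ by applying the local $\sup$-estimate of \cite{MZ}*{Lemma 2.111} to the subsolution $v=\bar u_{4B}-u$ together with the standard ${\rm BMO}$ estimates. Only oscillation-type quantities $u-\inf_{4B}u$ and $u-\bar u_{4B}$ ever appear, which is exactly what ${\rm BMO}$ controls; an absolute bound on $u$ is never used or needed. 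Your necessity sketch cannot be completed as written and should be replaced by an argument of this oscillation type.
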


\begin{proof} We first prove the sufficiency of condition \eqref{hausdorff} for the existence 
of a solution $u \in{\rm BMO} (\R^n)$, provided  \eqref{k-m-cond} holds, that is, 
$\W_{1, p} \sigma \not\equiv \infty$. 
The latter condition ensures (see \cite{PV2}) that there exists a solution $u$ to  \eqref{p-lap}, which satisfies pointwise bounds \eqref{k-m}. 
Next, we invoke the known pointwise gradient estimates  for 
solutions $u$ to \eqref{p-lap} in the case $2-\frac{1}{n} <p<n$, when 
$u \in W^{1,s}_{{\rm loc}}(\R^n)$ for $1\le s < \frac{n(p-1)}{n-1}$ (see \cite{DM}, \cite{KM}):
   \begin{equation}\label{p-grad}
   | \nabla u| \le C (\I_1 \sigma)^{\frac{1}{p-1}}. 
   \end{equation}
   By Poincare's inequality and \eqref{p-grad}, for $B=B(x, R)$ and $s\ge 1$, we have
   \begin{equation}\label{poincare}
\begin{split}
   \left(\frac{1}{|B|} \int_B |u- \bar u_B|^s dy\right)^{\frac{1}{s}}& \le C(n, s) \, R \, 
      \left(\frac{1}{|B|} \int_B |\nabla u|^s dy\right)^{\frac{1}{s}}\\ 
       &\le C(n, s) \, R \, \left(\frac{1}{|B|} \int_B (\I_1 \sigma)^{\frac{s}{p-1}} dy\right)^{\frac{1}{s}}.
   \end{split}
      \end{equation}
      
     We next prove that, for $1\le s < \frac{n(p-1)}{n-1}$,  
       \begin{equation}\label{est-nv}
      \int_B (\I_1 \sigma)^{\frac{s}{p-1}} dy   \le C \, |B|^{1-\frac{s}{n}},
      \end{equation}
  where $C$ does not depend on $B=B(x, R)$. Clearly,
  \[
  \I_1 \sigma(y)=(n-1) \int_0^R \frac{\sigma(B(y, \rho)}{\rho^{n-1}}\frac{d \rho}{\rho}
  + (n-1) \int_R^\infty \frac{\sigma(B(y, \rho)}{\rho^{n-1}}\frac{d \rho}{\rho}.  
  \]
  Hence, we can write  
  \[
   \int_B (\I_1 \sigma)^{\frac{s}{p-1}} dy =(n-1)^{\frac{s}{p-1}}  (I+II), 
  \]  
  where
   \begin{equation*}
\begin{split}
  I &=    
  \int_B \left( \int_0^R \frac{\sigma(B(y, \rho)}{\rho^{n-1}}\frac{d \rho}{\rho}\right)^{\frac{s}{p-1}} dy, \\
  II &=    
  \int_B \left( \int_R^\infty \frac{\sigma(B(y, \rho)}{\rho^{n-1}}\frac{d \rho}{\rho}\right)^{\frac{s}{p-1}} dy. 
  \end{split}
\end{equation*}

    By \eqref{hausdorff}, 
      \[
      \int_R^\infty \frac{\sigma(B(y, \rho)}{\rho^{n-1}}\frac{d \rho}{\rho}
      \le C \int_R^\infty \frac{\rho^{n-p}}{\rho^{n-1}}\frac{d \rho}{\rho}
      = \frac{C}{p-1} R^{1-p}.
      \] 
         
            Hence, for term $II$ we have 
      \[
      II \le C \,  R^{n-s}. 
      \]
      
      We next prove a similar estimate for term $I$ with  $s\ge1$. 
      Since $2-\frac{1}{n}<p<n$, we can assume without loss of generality that 
        \begin{equation}\label{cond-s}
      \max(1, p-1)\le s<\frac{(p-1)n}{n-1}. 
    \end{equation}

Notice that, for $y \in B(x, R)$ and $0<\rho<R$, we have $B(y, \rho)\subset B(x, 2R)$. 
Using the integral Minkowski inequality with $\frac{s}{p-1}\ge 1$ and 
taking into account \eqref{cond-s}, 
    we estimate  
              \begin{equation*}
\begin{split}
I & = \int_{B(x, R)}  \left( \int_0^R \int_{B(x, 2R)} \chi_{B(y, \rho)}(z)\, d \sigma(z)
\frac{d \rho}{\rho^n}\right)^{\frac{s}{p-1}} dy 
\\ 
& \le  \left[ \int_{B(x, 2R)} \int_0^{R} \left(\int_{B(z, \rho)} dy\right)^{\frac{p-1}{s}} \frac{d \rho}{\rho^n} d\sigma(z)\right]^{\frac{s}{p-1}} 
\\ 
& = |B(0,1)| \left[ \int_{B(x, 2R)} \int_0^{R} \rho^{\frac{n(p-1)}{s}-n} d \rho \, d\sigma(z)\right]^{\frac{s}{p-1}} 
\\ &=  C(p, s, n)  [\sigma(B(x, 2R))]^{\frac{s}{p-1}} R^{n-\frac{(n-1)s}{p-1}}.
   \end{split}
\end{equation*}
Consequently, by \eqref{hausdorff}, 
 \begin{equation*}
I \le C \, |B|^{1-\frac{s}{n}}. 
\end{equation*}
 Combining the preceding estimates for terms $I$ and $II$, we 
 obtain \eqref{est-nv}, for any ball $B=B(x, R)$. 
 
 In fact, estimate \eqref{est-nv}, and consequently \eqref{s-grad}, holds 
 for all $0<s<p$.  Indeed, by Jensen's inequality, we may assume 
 without loss of generality that $p-\epsilon \le s<p$. Then 
 by pointwise Hedberg's inequalities (see \cite{AH}*{Sec. 3.1}), 
 there exists a constant $c=c(p, n, \epsilon)$ such that, for all $\epsilon \in (0, p)$, 
 \[
 \I_1 \sigma \le c \, (M_p \mu)^{\frac{1-\epsilon}{p-\epsilon}} (\I_\epsilon \sigma)^{\frac{p-1}{p-\epsilon}},
 \]
 where  $\sigma \in \mathcal{M}^{+}(\R^n)$, and $M_p \sigma$ is the fractional maximal function of order $p$, which is uniformly bounded by \eqref{hausdorff}. Consequently, 
 by the preceding estimate and Jensen's inequality, for $p-\epsilon \le s<p$ we have
 \[
 \frac{1}{|B|} \int_B (\I_1 \sigma)^{\frac{s}{p-1}} dy   \le 
 \frac{c^{\frac{s}{p-1}}}{|B|} \int_B (\I_\epsilon \sigma)^{\frac{s}{p-\epsilon}} dy
 \le c^{\frac{s}{p-1}}  \Big(\frac{1}{|B|} \int_B (\I_\epsilon \sigma) dy\Big)^{\frac{s}{p-\epsilon}}. 
 \]
 Clearly, 
 \[
 \frac{1}{|B|} \int_B (\I_\epsilon \sigma) dy \le c(\epsilon, n) \, \Big[ \sigma (2B) R^{\epsilon-n} +\int_R^\infty \frac{\sigma(B(x, \rho))}{\rho^{n-\epsilon+1}} d \rho\Big].\]
Invoking \eqref{hausdorff}, we deduce that the right-hand side 
is bounded by $C \, R^{\epsilon-p}$, which yields  \eqref{est-nv} for all 
$0< s<p$. 
 
 Hence, by \eqref{poincare} with  
  $s\ge 1$,  we have 
\[
 \left(\frac{1}{|B|} \int_B |u- \bar u_B|^s dx\right)^{\frac{1}{s}}\le C, 
\]
where $C$ does not depend on $B$.  Thus, $u \in {\rm BMO}(\R^n)$.

Let us now prove the necessity of \eqref{hausdorff} for all $1<p<n$.  Notice that if 
a solution $u$ to  \eqref{p-lap} exists, then $\W_{1, p} \sigma \not\equiv \infty$ by \eqref{k-m}.  Suppose $u\in {\rm BMO} (\R^n)$ 
is a solution to  \eqref{p-lap}. Without loss of generality we may assume 
that $u\in W^{1,p}_{{\rm loc}}(\R^n)$. Otherwise, 
we replace $u$ with $u_k =\min (u, k)$, for  $k>0$. Since 
$u\ge 0$ is $p$-superharmonic, it follows that the same is true for $u_k$, and 
 $u_k \in W^{1,p}_{{\rm loc}}(\R^n)$ \cite{HKM}. Moreover, we clearly have $u_k \in {\rm BMO} (\R^n)$, and 
\[
 \Vert u_k \Vert_{{\rm BMO}(\R^n)}\le  \Vert u \Vert_{{\rm BMO}(\R^n)}.
 \]
 The corresponding $p$-measures $\sigma_k$ of the supersolutions $u_k$ 
 converge weakly 
 to $\sigma$ as $k \to +\infty$. Consequently, it suffices to prove \eqref{hausdorff}
 with  $\sigma_k$ and $u_k$ in place of $\sigma$ and $u$, respectively.

Let $B=B(x, R)$, and let $\eta\in C^\infty_0(\R^n)$ be a smooth cut-off 
function  supported in $2B$ such that 
$0\le \eta\le 1$, $\eta=1$ on $B$, with $|\nabla \eta|\le \frac{C}{R}$. 

We will use a Caccioppoli type estimate for supersolutions $u\ge 0$ 
to  \eqref{p-lap} on $4B$  
\cite{MZ}*{Lemma 2.113},  
which is based on the weak Harnack inequality:  
\begin{equation}\label{caccioppoli}
\int_{2B} |\nabla u|^{p-1} \eta^{p-1} |\nabla \eta| \, dy
\le C \, R^{n-p} (\inf_{B} u)^{p-1}. 
\end{equation}
In particular, by replacing $u$ in \eqref{caccioppoli} with  $u-\inf_{4B} u$, a nonnegative supersolution 
on $4B$, we deduce 
\begin{equation}\label{caccioppoli2}
\begin{split}
& \int_{2B} |\nabla u |^{p-1} \eta^{p-1} |\nabla \eta| \, dy\\
& \le C \, R^{n-p} \frac{1}{|B|}\int_{B} \Big[u(y) - \inf_{4B} u\Big]^{p-1} dy. 
\end{split}
\end{equation}
Integrating by parts and using \eqref{caccioppoli2}, we estimate
\begin{equation}\label{caccioppoli2a}
\begin{split}
\sigma (B)& =\int_{2B} \eta \, d \sigma
\\& = p \int_{2B} \eta^{p-1} \, \nabla \eta \cdot \nabla u \,   |\nabla u|^{p-2} dy 
\\ &  \le C \, R^{n-p} \frac{1}{|B|}\int_{B} \Big[u(y) - \inf_{4B} u\Big]^{p-1} dy. 
\end{split}
\end{equation}

On the other hand, if   $v$ is a 
weak subsolution on $2B$ and $s>p-1$, we have by \cite{MZ}*{Lemma 2.111}, 
\begin{equation}\label{caccioppoli3} 
\sup_{B} v 
 \le C \, \left( \frac{1}{|2 B|}\int_{2B} |v(y)|^s\, dy\right)^{\frac{1}{s}}. 
\end{equation}
Letting  $v=\bar u_{4B} -u$, 
we  obviously have 
\[
\sup_{4B} v  = \bar u_{4B} - \inf_{4B} u.  
\]
Hence,  by \eqref{caccioppoli3}  
with $4B$ in place of  $B$,  and $s>p-1$, 
\begin{equation*}
\begin{split}
0\le \bar u_{4B} - \inf_{4B} u \le   C \, \left( \frac{1}{|8B|}\int_{8B} |u- \bar u_{4B} |^s\, dy\right)^{\frac{1}{s}}.  
\end{split}
\end{equation*}
Using the well-known estimates for ${\rm BMO}$ functions, 
\[
|\bar u_{4B} - \bar u_{8B}| \le C(n) \,  \Vert u \Vert_{{\rm BMO}(\R^n)}, 
\]
we see that, for any $s>0$, 
\[
 \left(\frac{1}{|8B|}\int_{8B} |u(y) - \bar u_{4B}|^{s} dy 
  \right)^{\frac{1}{s}}
\le C \, \Vert u \Vert_{{\rm BMO}(\R^n)}.  
\]
Combining the preceding estimates, we deduce 
\begin{equation}\label{est-inf} 
\begin{split}
0\le \bar{u}_{4B} - \inf_{4B} u \le   C \, \Vert u \Vert_{{\rm BMO}(\R^n)},  
\end{split}
\end{equation}
where $C$ depends on $p, s$, and $n$. Thus, using \eqref{caccioppoli2a} together with \eqref{est-inf}, we estimate   
\begin{equation*}
\begin{split}
\sigma (B)&   \le C \, R^{n-p} \frac{1}{|B|}\int_{B} \Big[u(y) - \inf_{4B} u\Big]^{p-1} dy 
    \\& \le C \, R^{n-p} \left(\frac{1}{|B|}\int_{B} |u(y) - \bar u_{4B}|^{p-1} dy  
  +  [\bar u_{4B}  - \inf_{4B} u]^{p-1} \right)
   \\& \le C \, R^{n-p} \left(\frac{1}{|4B|}\int_{4B} |u(y) - \bar u_{4B}|^{p-1} dy 
   + \Vert u \Vert_{{\rm BMO}(\R^n)}^{p-1} 
  \right)^{p-1}
  \\& \le C \, R^{n-p} \Vert u \Vert^{p-1}_{{\rm BMO}(\R^n)}.
\end{split}
\end{equation*}
\end{proof}

\begin{Rem} An analogue of Lemma \ref{lemma-bmo} in the case $p=2$ is known for the fractional Laplacian $(-\Delta)^{\alpha}$ in place of the $p$-Laplacian. It 
can be deduced  from the fact that if $u=\I_{2\alpha} \sigma$, where $ \sigma \in \mathcal{M}^{+}(\R^n)$ and $\I_{2\alpha} \sigma\not\equiv \infty$, 
then
$u^{\sharp} \approx M_{2 \alpha} \sigma$, where $u^{\sharp} =M^{\sharp}(u)$ is the 
sharp maximal function of $u$, and 
$M_{2 \alpha}$ is the fractional maximal function of order $2 \alpha<n$; this estimate is due to D. Adams (see \cite{AH}). 
It follows that $u\in {\rm BMO} (\R^n)$ if and only if $M_{2 \alpha} \sigma\in L^\infty (\R^n)$, and $\I_{2\alpha} \sigma\not\equiv \infty$, for all $0<\alpha<\frac{n}{2}$. 
\end{Rem}

The next lemma concerns  $\sigma$ satisfying the capacity condition \eqref{cap-p}, 
which is stronger than \eqref{hausdorff}. As a result, solutions $u$ 
to \eqref{p-lap} 
belong 
to the more narrow class \eqref{eq:p-class}. Notice that this lemma 
(see \cite{AP} and the literature cited there) 
holds 
for all $1<p<n$.  In the case $2-\frac{1}{n}<p<n$ it follows 
from the pointwise gradient estimates \eqref{p-grad}. 

\begin{Lem}\label{lemma-cap} Let $0< q <p-1$, and 
$1<p <n$. Then  \eqref{p-lap} has a solution $u$ in the class \eqref{eq:p-class}
if and only if  $\sigma \in M^+(\R^n)$ satisfies  \eqref{k-m-cond} and \eqref{cap-p}.
\end{Lem}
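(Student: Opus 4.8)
The plan is to establish the two implications separately: the necessity of \eqref{k-m-cond} will be immediate, the necessity of \eqref{cap-p} will come from testing the equation against capacitary functions, and the sufficiency will follow from the existence theory for \eqref{p-lap} together with gradient estimates, with a self-contained argument in the range $2-\frac1n<p<n$ and an appeal to \cite{AP} for the remaining range $1<p\le 2-\frac1n$.

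\textbf{Necessity.} Suppose $u$ is a solution to \eqref{p-lap} lying in the class \eqref{eq:p-class}. Since $u$ is a nontrivial $p$-superharmonic solution it satisfies \eqref{k-m}, whence $\W_{1,p}\sigma\le Ku<\infty$ q.e.; thus $\W_{1,p}\sigma\not\equiv\infty$, which is \eqref{k-m-cond}. For \eqref{cap-p}, fix a compact $K$ and take $h$ admissible for \eqref{eq:p-cap} with $0\le h\le 1$ (truncation does not increase $\int|\nabla h|^p\,dx$). Testing $-\Delta_p u=\sigma$ against $h^p$, as in \eqref{caccioppoli2a}, and applying H\"older's inequality,
\[
\sigma(K)\le\int_{\R^n}h^p\,d\sigma=p\int_{\R^n}h^{p-1}|\nabla u|^{p-2}\nabla u\cdot\nabla h\,dx\le p\Big(\int_{\R^n}h^p|\nabla u|^p\,dx\Big)^{\frac{p-1}{p}}\Big(\int_{\R^n}|\nabla h|^p\,dx\Big)^{\frac1p}.
\]
By \eqref{eq:p-class} and Maz'ya's capacitary criterion, the measure $|\nabla u|^p\,dx$ obeys the trace inequality $\int\varphi^p|\nabla u|^p\,dx\le C\int|\nabla\varphi|^p\,dx$ for all $\varphi\in C^\infty_0(\R^n)$, so the first factor is at most $(C\int|\nabla h|^p\,dx)^{(p-1)/p}$. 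Hence $\sigma(K)\le C'\int|\nabla h|^p\,dx$, and taking the infimum over admissible $h$ gives \eqref{cap-p}.

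\textbf{Sufficiency.} Assume \eqref{k-m-cond} and \eqref{cap-p}. By \eqref{k-m-cond} we have $\W_{1,p}\sigma\not\equiv\infty$, so there is a nontrivial $p$-superharmonic solution $u$ to \eqref{p-lap} satisfying \eqref{k-m}; it remains to verify that $u$ belongs to the class \eqref{eq:p-class}. For $2-\frac1n<p<n$ the solution lies in $W^{1,s}_{{\rm loc}}(\R^n)$ for $1\le s<\frac{n(p-1)}{n-1}$ and satisfies the pointwise gradient bound \eqref{p-grad}, so for every compact $K$,
\[
\int_K|\nabla u|^p\,dx\le C\int_K(\I_1\sigma)^{\frac{p}{p-1}}\,dx.
\]
Now \eqref{cap-p} implies the Riesz-potential bound $\int_K(\I_1\sigma)^{p/(p-1)}\,dx\le C'\,{\rm cap}_p(K)$ for all compact $K$ — this is the dual form of Maz'ya's capacitary criterion (equivalently, a localized instance of Wolff's inequality \eqref{wolff-ineq}); see \cite{AH}, \cite{Maz}. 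Combining the two displays yields \eqref{eq:p-class}. For the full range $1<p<n$, where \eqref{p-grad} is unavailable, the same conclusion is the content of \cite{AP}: the pointwise gradient bound is replaced by a weighted Caccioppoli inequality for $\nabla u/(u+\varepsilon)$, together with the Hausdorff bound $\sigma(B(x,R))\le C R^{n-p}$ implied by \eqref{cap-p} and a capacitary strong-type estimate.

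\textbf{Main obstacle.} The delicate step is the sufficiency for $1<p\le 2-\frac1n$: without the pointwise bound \eqref{p-grad} one cannot deduce an $L^p$ estimate for $\nabla u$ directly from the size bound \eqref{k-m} on $u$, and the Caccioppoli argument has to be run with weights carefully matched to \eqref{cap-p} — precisely the work carried out in \cite{AP}. A secondary ingredient used in both directions is the translation between the capacitary form of the hypotheses (\eqref{cap-p}, \eqref{eq:p-class}) and the corresponding trace and strong-type inequalities; this is classical (Maz'ya, D. Adams) and is invoked rather than reproved.
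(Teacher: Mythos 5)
Your necessity argument is essentially identical to the paper's: both test $-\Delta_p u = \sigma$ against $v^p$ (your $h^p$) for a capacitary test function, apply H\"older, and use the Maz'ya criterion to convert \eqref{eq:p-class} into the trace inequality $\int v^p|\nabla u|^p\,dx\le C\int|\nabla v|^p\,dx$. That part is correct and matches the paper.

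For sufficiency your route diverges from the paper's, and the divergence is worth noting. In the range $2-\frac1n<p<n$ you apply the pointwise gradient estimate $|\nabla u|\le C(\I_1\sigma)^{1/(p-1)}$ and reduce to the bound $\int_K(\I_1\sigma)^{p'}\,dx\le C\,\operatorname{cap}_p(K)$; that is a correct and perfectly serviceable approach (the paper itself flags it as an alternative in the remark preceding the lemma). However, your attribution of the Riesz-potential capacitary bound to ``a localized instance of Wolff's inequality'' is off: Wolff's inequality \eqref{wolff-ineq} is a global energy equivalence $\int(\I_\alpha\sigma)^{r'}dx\approx\int\W_{\alpha,r}\sigma\,d\sigma$ and does not by itself yield a capacitary bound. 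What you need is the Maz'ya--Verbitsky capacitary inequality for fractional integrals, \cite{MV}*{Theorem 2.1} (see also \cite{V1}), which says precisely that \eqref{cap-p} forces $\int_K(\I_1\sigma)^{p'}dx\le C\,\operatorname{cap}_p(K)$. The paper invokes this reference directly.

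For the full range $1<p<n$ the paper does not use a weighted Caccioppoli estimate for $\nabla u/(u+\varepsilon)$ as you describe; it follows a different path: it builds $v$ with $-\Delta v=\sigma$ and $|\nabla v|\le C\,\I_1\sigma$ via \cite{HMV}*{Lemma 2.5}, applies \cite{MV}*{Theorem 2.1} to get $\int_K|\nabla v|^{p'}dx\le C\,\operatorname{cap}_p(K)$, writes $\sigma=\operatorname{div}\mathbf{F}$ with $\mathbf{F}=-\nabla v$, and then invokes \cite{AP}*{Lemma 2.7}, which transfers the capacitary $L^{p'}$ bound on $\mathbf{F}$ to the estimate \eqref{eq:p-class} for the solution of $-\Delta_p u=\operatorname{div}\mathbf{F}$. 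This divergence-form trick is the actual mechanism, and it handles all $1<p<n$ uniformly, so there is no need to split into cases. You correctly flagged the sub-range $1<p\le 2-\frac1n$ as the delicate part and correctly pointed to \cite{AP}, but your description of the ingredient from \cite{AP} is inaccurate, and without the intermediate step through the linear Poisson problem and \cite{MV} the argument is incomplete. So: the necessity and the upper-range sufficiency are fine; the full-range sufficiency needs to be replaced by the divergence-form argument, and the ``localized Wolff'' justification for the Riesz-potential capacitary bound should be corrected to \cite{MV}*{Theorem 2.1}.
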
 

\begin{proof} Suppose $u$ satisfies condition \eqref{eq:p-class} and is a solution to 
 \eqref{p-lap}, so that \eqref{k-m}, and consequently \eqref{k-m-cond}, holds. 
 Let $v \in C^\infty_0(\R^n)$, $v \ge 0$, and 
 $v\ge 1 $ on a compact set $K\subset \R^n$. 
 Then, integrating by parts, we estimate 
\begin{equation*}
\begin{split}
\sigma(K) \le \int_{\R^n}  v^p d \sigma & =p \int_{\R^n} v^{p-1} \nabla v \cdot  \nabla u \, |\nabla u|^{p-2} dx\\&  \le p  \Vert \nabla v\Vert_{L^p(\R^n)} \left (\int_{\R^n} v^p  |\nabla u|^{p} dx\right)^{\frac{1}{p'}}.
\end{split}
\end{equation*}
It follows from   \eqref{eq:p-class} (see \cite{Maz}*{Sec. 2.4.1})
\[
\int_{\R^n} v^p  |\nabla u|^{p} dx\le C \int_{\R^n} |\nabla v|^{p} dx.
\]
Hence, 
\[
\sigma(K) \le C  \Vert \nabla v\Vert^p_{L^p(\R^n)}. 
\]
Minimizing the right-hand side over all such $v$ yields \eqref{cap-p}. 

To prove the converse statement, notice that there 
exists a solution $u$ to \eqref{p-lap}, in view of \eqref{k-m-cond}, which satisfies \eqref{k-m} (see, for example, \cite{PV2}). Moreover, such 
a solution is known to be unique (see \cite{KiMa}, \cite{KM}), since $\sigma$ 
is absolutely continuous with respect to the $p$-capacity by  \eqref{cap-p}. 

Clearly, \eqref{cap-p} yields \eqref{hausdorff}, that is, 
$\sigma (B(x, R))\le C \, R^{n-p}$ for all $x \in \R^n$ and $R>0$. In particular, 
$\mathbf{I}_1 \sigma \not\equiv \infty$, for all $1<p<n$. As was shown in \cite{HMV}*{Lemma 2.5}, for such $\sigma$ there exists 
a solution $v$ (not necessarily positive) to the Poisson equation $-\Delta v =\sigma$ such that 
$|\nabla v| \le C \, \mathbf{I}_1 \sigma$. Moreover, by \cite{MV}*{Theorem 2.1} 
with $l=1$ 
(see also \cite{V1}*{Theorem 1.7}), condition  \eqref{cap-p} yields that 
there exists a positive constant $c=c(p, n)$ such that, for all compact sets $K\subset \R^n$,   
 \begin{equation}\label{cap-dual}
\int_K |\nabla v|^{p'} dx \le c \,  C \, \textrm{cap}_p (K),  
\end{equation}
where $C$ is the constant in \eqref{cap-p}.  
Setting $\mathbf{F}=-\nabla v$, so that $\textrm{div} \, \mathbf{F}=\sigma$, and 
consequently $-\Delta_p u = \textrm{div} \, \mathbf{F}$, we deduce   
using 
  \cite{AP}*{Lemma 2.7} 
  that, in view of \eqref{cap-dual}, the solution  $u$ satisfies 
  \eqref{eq:p-class}. 
\end{proof}

We next prove an enhanced Wolff inequality 
for intrinsic nonlinear potentials $\mathbf{K}_{\alpha, p, q} \sigma$ 
in the dyadic case. The dyadic version $\mathbf{K}^d_{\alpha, p, q} \sigma$ 
is defined by \eqref{intrinsic-K-d}. We will also need a localized version of $\mathbf{K}^d_{\alpha, p, q} \sigma$, for a cube $P \in \mathcal{Q}$:
\[
\mathbf{K}^{d, P}_{\alpha, p, q} \sigma = \sum_{Q \subseteq P}\left[\frac{ \kappa(Q)^{\frac{q(p-1)}{p-1-q}}}{|Q|^{1- \frac{\alpha p}{n}}}\right]^{\frac{1}{p-1}} \chi_Q. 
\]

By $\mathbf{W}^{d,P}_{\alpha, p} \sigma$  and $ \mathbf{I}^{d,P}_{\alpha} \sigma$ we denote the corresponding  localized dyadic versions 
of the potentials $\mathbf{W}_{\alpha, p} \sigma$  and $ \mathbf{I}_{\alpha} \sigma$, respectively: 
  \[
\mathbf{W}^{d, P}_{\alpha, p} \sigma = \sum_{Q \subseteq P}\left[\frac{ \sigma(Q)}{|Q|^{1- \frac{\alpha p}{n}}}\right]^{\frac{1}{p-1}} \chi_Q, \quad 
 \mathbf{I}^{d, P}_{\alpha} \sigma = \sum_{Q \subseteq P} \frac{ \sigma(Q)}{|Q|^{1- \frac{\alpha}{n}}} \chi_Q.
\]

\begin{Lem}\label{lemma-new} Let $\sigma \in M^+(\R^n)$, and let $0< q <p-1$, 
$0<\alpha <\frac{n}{p}$, and 
$r>\frac{n(p-1)}{n-\alpha p}$. Then 
 \begin{equation}\label{sup-est}
\int_{\R^n} \left(\mathbf{K}^d_{\alpha, p, q} \sigma\right)^r dx \approx 
   \int_{\R^n} \sup_{P \in \mathcal{Q}: \, x \in P} \left(\frac{ \kappa(P)^{\frac{q(p-1)}{p-1-q}}}{|P|^{1- \frac{\alpha p}{n}}}\right)^{\frac{r}{p-1}}  dx,
\end{equation}
with constants of equivalence that do not depend on $\sigma$. 
\end{Lem}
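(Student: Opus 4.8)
The plan is to establish the two inequalities in \eqref{sup-est} separately, exploiting the summation-over-dyadic-cubes structure of $\mathbf{K}^d_{\alpha, p, q}\sigma$. Write $a_Q = \left(\frac{\kappa(Q)^{q(p-1)/(p-1-q)}}{|Q|^{1-\alpha p/n}}\right)^{\frac{1}{p-1}}$, so that $\mathbf{K}^d_{\alpha, p, q}\sigma = \sum_{Q}a_Q\chi_Q$ and the right-hand side of \eqref{sup-est} is $\int_{\R^n}\left(\sup_{Q\ni x}a_Q\right)^r dx$. One direction is trivial: since $a_P \le \sum_{Q\ni x}a_Q = \mathbf{K}^d_{\alpha, p, q}\sigma(x)$ for every $P\ni x$, taking the supremum and then the $L^r$-norm gives $\int(\sup_{Q\ni x}a_Q)^r dx \le \int(\mathbf{K}^d_{\alpha, p, q}\sigma)^r dx$, so the right side is dominated by the left with constant $1$. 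The real content is the reverse inequality.

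\textbf{Step 1: a quasi-geometric decay of the coefficients $\kappa(Q)$ along chains.} First I would record that the embedding constants satisfy a ``nesting with gain'' property: if $Q\subset P$ are dyadic cubes, then $\kappa(Q)\le \kappa(P)$ (monotonicity of $\sigma|_E$ in $E$), but more importantly there is a doubling-type comparison relating $\kappa(P)^{q(p-1)/(p-1-q)}/|P|^{1-\alpha p/n}$ across scales. The key quantitative fact I expect to need is that, because $r > \frac{n(p-1)}{n-\alpha p}$, the exponent $\theta := 1-\frac{\alpha p}{n}$ satisfies $\theta r/(p-1) > 1$, which is exactly the summability threshold that converts a sum over a chain of nested dyadic cubes into a geometric series controlled by its largest term. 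Concretely: if $P_0\supset P_1\supset P_2\supset\cdots$ is a chain with $|P_{k}| = 2^{-kn}|P_0|$, and if $\kappa(P_k)\le \kappa(P_0)$ for all $k$, then $\sum_{k\ge 0}a_{P_k} \le \kappa(P_0)^{\frac{q}{p-1-q}}\sum_{k\ge0}|P_k|^{-\theta/(p-1)} \lesssim \kappa(P_0)^{\frac{q}{p-1-q}}|P_0|^{-\theta/(p-1)} = a_{P_0}$ only if we work from the \emph{bottom} up — but in $\mathbf{K}^d$ the sum over $Q\ni x$ runs over all cubes containing $x$, i.e.\ goes \emph{up} to arbitrarily large scales, so the geometric decay has to be extracted from the large-scale side, where $|Q|^{-\theta/(p-1)}\to 0$. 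This is where $\theta>0$ (i.e.\ $\alpha p<n$) and the finiteness condition \eqref{inf-d} enter.

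\textbf{Step 2: pointwise domination of $\mathbf{K}^d$ by a geometric series in the maximal term.} The heart of the argument is to show there is a constant $c=c(\alpha,p,q,r,n)$ such that, for a.e.\ $x$,
\[
\mathbf{K}^d_{\alpha, p, q}\sigma(x) = \sum_{Q\ni x}a_Q \le c\,\sup_{Q\ni x}a_Q.
\]
For this I would order the cubes $Q\ni x$ as $Q^{(0)}\subsetneq Q^{(1)}\subsetneq\cdots$ (so $Q^{(j)}$ is the dyadic cube containing $x$ of sidelength $2^j\cdot(\text{smallest relevant side})$) and split the sum at the index $j_\ast$ achieving the supremum. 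For $j\le j_\ast$ one uses $\kappa(Q^{(j)})\le\kappa(Q^{(j_\ast)})$ and $|Q^{(j)}|\le|Q^{(j_\ast)}|$, hence $a_{Q^{(j)}}\le \kappa(Q^{(j_\ast)})^{\frac{q}{p-1-q}}|Q^{(j)}|^{-\theta/(p-1)}$, and the sum $\sum_{j\le j_\ast}|Q^{(j)}|^{-\theta/(p-1)}$ is a geometric series dominated by its last term $|Q^{(j_\ast)}|^{-\theta/(p-1)}$ (this is where $\theta>0$ is used — smaller cubes have larger weight, but there are geometrically few of them and we are summing the \emph{reciprocal} powers... ) — I anticipate the clean way is instead to use the reverse: for $j\ge j_\ast$, $a_{Q^{(j)}}\le \kappa(Q^{(j)})^{\frac{q}{p-1-q}}|Q^{(j)}|^{-\theta/(p-1)}$ and one needs a decay of $\kappa(Q^{(j)})^{q/(p-1-q)}|Q^{(j)}|^{-\theta/(p-1)}$ in $j$, which does \emph{not} follow from monotonicity alone. \textbf{This is the main obstacle:} controlling the large-scale tail $\sum_{j\ge j_\ast}a_{Q^{(j)}}$ by $a_{Q^{(j_\ast)}}$ requires that the sequence $(a_{Q^{(j)}})_{j\ge j_\ast}$ not merely be summable but be pointwise comparable to its initial term times a geometric factor. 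I would obtain this from a submultiplicativity/nesting estimate for $\kappa$: the weighted norm inequality \eqref{kap-local} for $\sigma|_{Q^{(j)}}$ can be localized by splitting the Wolff potential $\W_{\alpha,p}\nu$ into the part at scales below $\mathrm{side}(Q^{(j)})$ and above, yielding $\kappa(Q^{(j)})^{p-1} \lesssim \kappa(Q^{(j-1)})^{p-1} + (\text{tail term involving }\sigma(Q^{(j)})|Q^{(j)}|^{-\theta})$; iterating and using that the tail terms are geometrically summable because of the exponent condition on $r$ gives the needed control. Once the pointwise bound $\mathbf{K}^d_{\alpha,p,q}\sigma(x)\le c\sup_{Q\ni x}a_Q$ is in hand, raising to the $r$-th power and integrating completes the reverse inequality, and \eqref{sup-est} follows; the threshold $r>\frac{n(p-1)}{n-\alpha p}$ is precisely the condition making every geometric series in the argument converge with the right constant.
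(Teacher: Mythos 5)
The easy direction of \eqref{sup-est} you have right, and you correctly identify that the substance is the reverse inequality. But the strategy you propose for it — a \emph{pointwise} bound $\mathbf{K}^d_{\alpha,p,q}\sigma(x)\le c\,\sup_{Q\ni x}a_Q$, where $a_Q=\bigl(\kappa(Q)^{\frac{q(p-1)}{p-1-q}}/|Q|^{1-\alpha p/n}\bigr)^{1/(p-1)}$ — is false in general and cannot be the mechanism. If, say, $\kappa(Q)^{\frac{q(p-1)}{p-1-q}}\approx |Q|^{1-\alpha p/n}$ along a long chain of nested cubes $Q^{(0)}\subset Q^{(1)}\subset\cdots\subset Q^{(N)}$ containing $x$, then $a_{Q^{(j)}}\approx 1$ for $0\le j\le N$, so $\sum_{Q\ni x}a_Q\ge N$ while $\sup_Q a_Q\approx 1$; no fixed constant $c(\alpha,p,q,r,n)$ can work. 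Your own calculation exhibits the same obstruction: with $\theta=1-\alpha p/n>0$ the sum $\sum_k|P_k|^{-\theta/(p-1)}$ over a \emph{descending} chain $|P_k|=2^{-kn}|P_0|$ diverges (each term is larger than the last), and on the large-scale side $\kappa(Q)$ is nondecreasing while $|Q|^{-\theta/(p-1)}\to0$ with no a priori relation between the two rates, which is why you flag it as ``the main obstacle.'' The ``submultiplicativity/nesting estimate'' you sketch for $\kappa$ to escape this is not substantiated, and — crucially — a pointwise bound would not involve $r$ at all, so it cannot explain why the lemma requires $r>\frac{n(p-1)}{n-\alpha p}$.

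The actual proof does not prove anything pointwise. It is an $L^r$-level argument with two key inputs that your proposal is missing. First, it invokes the norm equivalence for positive dyadic sums (\cite{COV2}*{Proposition 2.2}): for $s>1$ and nonnegative coefficients $b_Q$,
\[
\int_{\R^n}\Bigl(\sum_{Q\in\mathcal Q} b_Q\chi_Q\Bigr)^s dx \approx \int_{\R^n}\sup_{P\ni x}\Bigl(\frac{1}{|P|}\sum_{Q\subseteq P} b_Q|Q|\Bigr)^s dx.
\]
This is where the sum is traded for a supremum, and it is exactly here that the restriction on $r$ enters: one needs $r>1$ (applied at exponent $r$ for $p\ge2$), or $r/(p-1)>1$ (applied at exponent $r/(p-1)$ after first using subadditivity of $t\mapsto t^{1/(p-1)}$ when $1<p<2$), both of which are guaranteed by $r>\frac{n(p-1)}{n-\alpha p}$. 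Second, to control the inner average $\frac{1}{|P|}\sum_{Q\subseteq P}a_Q|Q|$ by $a_P$, the proof linearizes the problem using the intrinsic two-sided estimate of \cite{CV2}*{Lemma 4.2, Corollary 4.3}: for $Q\subseteq P$,
\[
\kappa(Q)^{\frac{q(p-1)}{p-1-q}}\lesssim \int_Q u_P^q\,d\sigma\le\int_P u_P^q\,d\sigma\lesssim\kappa(P)^{\frac{q(p-1)}{p-1-q}},
\]
where $u_P$ solves the equation with datum $\sigma|_P$. This replaces the hard-to-control family $\{\kappa(Q)\}$ by the single measure $d\omega=u_P^q\,d\sigma_P$; then Jensen and the elementary telescoping identity $\sum_{Q\subseteq P}|Q|^{\alpha p/n}=(1-2^{-\alpha p})^{-1}|P|^{\alpha p/n}$ (the only genuine geometric series, convergent because $\alpha p>0$, independent of $r$) give $\frac{1}{|P|}\sum_{Q\subseteq P}a_Q|Q|\lesssim a_P$. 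That averaged bound, not a pointwise one, is what survives; your proposal has a genuine gap here.
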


\begin{proof} The lower bound in \eqref{sup-est} is obvious, since clearly 
\[
\mathbf{K}^d_{\alpha, p, q} \sigma  \ge \sup_{Q \in \mathcal{Q}: \, x \in Q} \left[\frac{ \kappa(Q)^{\frac{q(p-1)}{p-1-q}}}{|Q|^{1- \frac{\alpha p}{n}}}\right]^{\frac{1}{p-1}}.
\]

 Let us prove the upper bound. 
For $r>1$, we have   (see \cite{COV2}*{Proposition 2.2}):
  \begin{equation*} 
\int_{\R^n} \left(\mathbf{K}^d_{\alpha, p, q} \sigma\right)^r dx \approx 
\int_{\R^n} \sup_{P \in \mathcal{Q}: \, x \in P} \left(\frac{1}{|P|}  \sum_{Q \subseteq P} \left[\frac{ \kappa(Q)^{\frac{q(p-1)}{p-1-q}}}{|Q|^{1- \frac{\alpha p}{n}}}\right]^{\frac{1}{p-1}} |Q|\right)^r dx.
\end{equation*}

We will need the following estimates  (see \cite{CV2}*{Lemma 4.2 and Corollary 4.3}):  
for every $Q\subseteq P$, we have 
\begin{equation}\label{est-dd}
\begin{split}
C(\alpha, p, q, n) [\kappa(Q)]^{\frac{q}{p-1-q}}
& \le \Big[\int_Q u_P^q d \sigma\Big]^{\frac{1}{p-1}}\\ & \le \Big[\int_P u_P^q d \sigma\Big]^{\frac{1}{p-1}} \le  [\kappa(P)]^{\frac{q}{p-1-q}}.
\end{split}
\end{equation}
Here $u_P$ is a solution to \eqref{eq:p-laplacian} with $\sigma_P$ 
in place of $\sigma$.

We estimate using the lower bound in \eqref{est-dd}, 
 \begin{equation*}
\sum_{Q \subseteq P} \left[\frac{ \kappa(Q)^{\frac{q(p-1)}{p-1-q}}}{|Q|^{1- \frac{\alpha p}{n}}}\right]^{\frac{1}{p-1}} |Q|  \le \sum_{Q \subseteq P} \left[\frac{ \int_Q u^q_P d \sigma}{|Q|^{1- \frac{\alpha p}{n}}}\right]^{\frac{1}{p-1}} |Q|.
\end{equation*}

 Let $r>\frac{n(p-1)}{n-\alpha p}$. If $r\le 1$, then $p<\frac{2n}{n+\alpha}<2$. This case 
 will be considered below.

 For $p\ge 2$, we have:
  \begin{equation*}
\begin{split}  
\sum_{Q \subseteq P} \left[\frac{ \int_Q u^q_P d \sigma}{|Q|^{1- \frac{\alpha p}{n}}}\right]^{\frac{1}{p-1}} |Q| & =\int_P \mathbf{W}^{d,P}_{\alpha, p} (u^q_P d \sigma_P) dx 
\\ &  \approx \int_P \Big(\mathbf{I}^{d,P}_{\alpha p} (u^q_P d \sigma_P)\Big)^{\frac{1}{p-1}} dx
\\& \le \left( \frac{1}{|P|} \int_P \Big(\mathbf{I}^{d,P}_{\alpha p} (u^q_P d \sigma_P) \Big) dx\right)^{\frac{1}{p-1}} 
|P| 
\\& = \left( \frac{1}{|P|} \sum_{Q \subseteq P} \frac{ \int_Q u^q_P d \sigma}{|Q|^{1- \frac{\alpha p}{n}}} |Q|\right)^{\frac{1}{p-1}} 
|P|.
\end{split} 
\end{equation*}

Notice that $\sum_{Q\subseteq P} |Q|^{\frac{\alpha p}{n}} =(1-2^{-\alpha p})^{-1} 
|P|^{\frac{\alpha p}{n}}$. Consequently, we have 
  \begin{equation}\label{telescope}
\begin{split}  
 \sum_{Q \subseteq P} \frac{ \int_Q u^q_P d \sigma}{|Q|^{1- \frac{\alpha p}{n}}} |Q|&= 
  \sum_{Q \subseteq P} |Q|^{\frac{\alpha p}{n}} \int_Q u^q_P d \sigma  
 \\& = (1-2^{-\alpha p})^{-1}  |P|^{\frac{\alpha p}{n}} \int_P u^q_P d \sigma \\& \le C |P|^{\frac{\alpha p}{n}}\kappa(P)^{\frac{q(p-1)}{p-1-q}},
\end{split} 
\end{equation}
where in the last line we used the upper estimate in \eqref{est-dd}. 

Thus, in the case $p \ge 2$ and $r>1$, we have 
 \begin{equation*} 
\int_{\R^n} \left(\mathbf{K}^d_{\alpha, p, q} \sigma\right)^r dx 
\le C \int_{\R^n} \sup_{P \in \mathcal{Q}: \, x \in P} \left(\frac{ \kappa(P)^{\frac{q(p-1)}{p-1-q}}}{|P|^{1- \frac{\alpha p}{n}}}\right)^{\frac{r}{p-1}}  dx.
\end{equation*}

In the case $1<p<2$ we have $\frac{1}{p-1}>1$. Hence, clearly,  
 \begin{equation*} 
 \int_{\R^n} \left(  \sum_{Q} \left[\frac{ \kappa(Q)^{\frac{q(p-1)}{p-1-q}}}{|Q|^{1- \frac{\alpha p}{n}}}\right]^{\frac{1}{p-1}} \chi_Q\right)^r dx\le 
 \int_{\R^n} \left(  \sum_{Q} \frac{ \kappa(Q)^{\frac{q(p-1)}{p-1-q}}}{|Q|^{1- \frac{\alpha p}{n}}} \chi_Q\right)^{\frac{r}{p-1}} dx.
 \end{equation*}

Since $\frac{r}{p-1}>\frac{n}{n-\alpha p}>1$, we deduce using \cite{COV2}*{Proposition 2.2} again, 
 \begin{equation*} 
 \begin{split}
 & \int_{\R^n} \left(  \sum_{Q} \frac{ \kappa(Q)^{\frac{q(p-1)}{p-1-q}}}{|Q|^{1- \frac{\alpha p}{n}}} \chi_Q\right)^{\frac{r}{p-1}} dx\\ &\le 
  \int_{\R^n}  \sup_{P \in \mathcal{Q}: \, x \in P} \left(\frac{1}{|P|}  
   \sum_{Q \subseteq P} \frac{ \kappa(Q)^{\frac{q(p-1)}{p-1-q}}}{|Q|^{1- \frac{\alpha p}{n}}} |Q|\right)^{\frac{r}{p-1}} dx.
   \end{split}
 \end{equation*}

We estimate as above, using \eqref{telescope},
 \begin{equation*}
\begin{split}  
 & \frac{1}{|P|}   \sum_{Q \subseteq P} \frac{ \kappa(Q)^{\frac{q(p-1)}{p-1-q}}}{|Q|^{1- \frac{\alpha p}{n}}} |Q|  \le \frac{1}{|P|}  
 \sum_{Q \subseteq P} \frac{ \int_Q u^q_P d \sigma}{|Q|^{1- \frac{\alpha p}{n}}} |Q| 
 \\ &= (1-2^{-\alpha p})^{-1} \frac{ \int_P u^q_P d \sigma}{|P|^{1- \frac{\alpha p}{n}}} 
   \le C  \frac{ \kappa(P)^{\frac{q(p-1)}{p-1-q}}}{|P|^{1- \frac{\alpha p}{n}}}.
\end{split} 
\end{equation*}
Thus, as in the case $p\ge 2$ and $r>1$ above, we have 
 \begin{equation*} 
 \int_{\R^n} \left(  \sum_{Q} \frac{ \kappa(Q)^{\frac{q(p-1)}{p-1-q}}}{|Q|^{1- \frac{\alpha p}{n}}} \chi_Q\right)^{\frac{r}{p-1}} dx\le C 
  \int_{\R^n}  \sup_{P \in \mathcal{Q}: \, x \in P} \left(  
\frac{ \kappa(P)^{\frac{q(p-1)}{p-1-q}}}{|P|^{1- \frac{\alpha p}{n}}}\right)^{\frac{r}{p-1}} dx.
 \end{equation*}
\end{proof}

There is a localized version of Lemma \ref{lemma-new}. 
\begin{Lem}\label{lemma-new-loc} Let $\sigma \in M^+(\R^n)$, and let $0< q <p-1$, 
$0<\alpha <\frac{n}{p}$, and 
$r>\frac{n(p-1)}{n-\alpha p}$. Let $P \in \mathcal{Q}$. 
Then 
 \begin{equation}\label{sup-est-loc}
\int_{P} \left(\mathbf{K}^{d, P}_{\alpha, p, q} \sigma\right)^r dx \approx 
   \int_{P} \sup_{\substack{R: \, x \in R\\ R\subseteq P}}  \left(\frac{ \kappa(R)^{\frac{q(p-1)}{p-1-q}}}
   {|R|^{1- \frac{\alpha p}{n}}}\right)^{\frac{r}{p-1}}  dx,
\end{equation}
with constants of equivalence that do not depend on $\sigma$ and $P$. 
\end{Lem}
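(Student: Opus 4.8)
The plan is to transcribe the proof of Lemma~\ref{lemma-new} almost verbatim, replacing the full dyadic lattice $\mathcal{Q}$ everywhere by the subtree $\{Q\in\mathcal{Q}:\, Q\subseteq P\}$ rooted at $P$, and $\R^n$ by $P$. The lower bound in \eqref{sup-est-loc} is immediate: for each $x\in P$ and each dyadic $R\subseteq P$ with $x\in R$,
\[
\mathbf{K}^{d,P}_{\alpha,p,q}\sigma(x)\ge \left[\frac{\kappa(R)^{\frac{q(p-1)}{p-1-q}}}{|R|^{1-\frac{\alpha p}{n}}}\right]^{\frac{1}{p-1}};
\]
taking the supremum over such $R$, raising to the power $r$ and integrating over $P$ proves the lower estimate.

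For the upper bound, the first step is to record the \emph{localized} form of \cite{COV2}*{Proposition~2.2}: for nonnegative coefficients $\{a_Q\}_{Q\subseteq P}$ and $r>1$,
\[
\int_P\Big(\sum_{Q\subseteq P} a_Q\,\chi_Q\Big)^r dx \;\approx\; \int_P \sup_{\substack{R\subseteq P\\ x\in R}}\Big(\frac{1}{|R|}\sum_{Q\subseteq R} a_Q\,|Q|\Big)^r dx,
\]
with constants depending only on $r$ and $n$. This is just Proposition~2.2 with $P$ playing the role of the maximal cube; it holds because that proposition uses only the tree structure of the dyadic cubes inside a fixed cube and no global property of $\R^n$, a point I would state explicitly. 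Applying it with $a_Q=\big[\kappa(Q)^{\frac{q(p-1)}{p-1-q}}/|Q|^{1-\frac{\alpha p}{n}}\big]^{1/(p-1)}$ reduces the upper bound in \eqref{sup-est-loc} to showing that, for every $R\subseteq P$,
\[
\frac{1}{|R|}\sum_{Q\subseteq R}\left[\frac{\kappa(Q)^{\frac{q(p-1)}{p-1-q}}}{|Q|^{1-\frac{\alpha p}{n}}}\right]^{\frac{1}{p-1}}|Q| \;\le\; C\left[\frac{\kappa(R)^{\frac{q(p-1)}{p-1-q}}}{|R|^{1-\frac{\alpha p}{n}}}\right]^{\frac{1}{p-1}}.
\]

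To prove this I would argue exactly as in Lemma~\ref{lemma-new}, using that the two-sided bound \eqref{est-dd} is intrinsically local: for the dyadic cube $R$ and the solution $u_R$ of \eqref{eq:p-laplacian} with $\sigma_R=\sigma|_R$ in place of $\sigma$, it gives $\kappa(Q)^{\frac{q}{p-1-q}}\lesssim\big(\int_Q u_R^q\,d\sigma\big)^{1/(p-1)}$ for $Q\subseteq R$ and $\big(\int_R u_R^q\,d\sigma\big)^{1/(p-1)}\le \kappa(R)^{\frac{q}{p-1-q}}$. When $p\ge 2$, one recognizes the sum (after lowering $\kappa(Q)$) as $\int_R \mathbf{W}^{d,R}_{\alpha,p}(u_R^q\,d\sigma_R)\,dx$, passes to $\mathbf{I}^{d,R}_{\alpha p}$ via the elementary relation between $\mathbf{W}^{d,R}_{\alpha,p}$ and $\mathbf{I}^{d,R}_{\alpha p}$, applies Jensen's inequality with exponent $\tfrac{1}{p-1}\le 1$, and finishes by the telescoping identity $\sum_{Q\subseteq R}|Q|^{\alpha p/n}=(1-2^{-\alpha p})^{-1}|R|^{\alpha p/n}$ and the upper bound in \eqref{est-dd}, as in \eqref{telescope}. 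When $1<p<2$, one first uses $\tfrac{1}{p-1}>1$ to pull the exponent outside the sum, applies the localized Proposition~2.2 again to the resulting linear $(\alpha p,1)$-type dyadic sum — legitimate since $\tfrac{r}{p-1}>\tfrac{n}{n-\alpha p}>1$ — and again telescopes and invokes \eqref{est-dd}. The borderline subcase $r\le 1$ (which forces $p<\tfrac{2n}{n+\alpha}<2$) is handled, as in Lemma~\ref{lemma-new}, by the pointwise inequality $(\mathbf{K}^{d,P}_{\alpha,p,q}\sigma)^r\le\big(\sum_{Q\subseteq P}\kappa(Q)^{\frac{q(p-1)}{p-1-q}}|Q|^{-1+\frac{\alpha p}{n}}\chi_Q\big)^{r/(p-1)}$ with $\tfrac{r}{p-1}>1$, reducing once more to the previous argument applied inside $P$.

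The only point requiring attention, and the main (minor) obstacle, is verifying that \cite{COV2}*{Proposition~2.2} genuinely localizes to the dyadic subtree below $P$ with constants independent of $P$; once that is granted, the whole argument is scale- and translation-invariant (the telescoping identity, \eqref{est-dd}, and Proposition~2.2 all are), so the constants of equivalence in \eqref{sup-est-loc} depend only on $\alpha,p,q,r,n$ and on neither $\sigma$ nor $P$, as claimed.
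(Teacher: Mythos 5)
Your proposal is correct, and it is exactly what the paper intends: the paper omits this proof, remarking only that it is ``essentially the same'' as that of Lemma~\ref{lemma-new}, which is precisely the cube-by-cube transcription you carry out, with the two localization ingredients you single out (the subtree form of \cite{COV2}*{Proposition~2.2} and the genuinely local nature of the two-sided bound \eqref{est-dd}) being the only points that need checking.
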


The proof of  Lemma \ref{lemma-new-loc} is essentially the same as that 
of Lemma \ref{lemma-new}, and we omit it here.


\section{Proofs of the main theorems and corollaries}\label{sec4} 

In this section, we prove the main theorems and corollaries stated in the Introduction. 

It is shown in  \cite{CV2} that \eqref{eq:p-laplacian} has a positive (super) solution 
if and only if the same is true for  \eqref{eq:int2} in the case $\alpha=1$. Moreover, 
the conditions in Theorems \ref{thm:main1} and \ref{thm:main5} are equivalent, since one can use embedding constants $\kappa(B)$ in place 
of $\varkappa(B)$ if $\alpha=1$ (see Sec. \ref{sect:wolff}). Thus, it suffices to prove only Theorem \ref{thm:main5}. 

\begin{proof}[Proof of Theorem \ref{thm:main5}]
Notice that, for all $R>0$ and $x \in \R^n$, we obviously have 
\[
 \frac{ [\kappa(B(x, R))]^{\frac{q}{p-1-q}}}{R^{\frac{n-\alpha p}{p-1}}} \le 
 2^{\frac{n-\alpha p}{p-1}} (\log 2)\int_R^{2R}  \left(\frac{ [\kappa(B(x, \rho))]^{\frac{q(p-1)}{p-1-q}}}{\rho^{n-\alpha p}}\right)^{\frac{1}{p-1}}
 \frac{d \rho}{\rho}.
\]
Hence, 
\begin{equation} \label{est-super}
\sup_{\rho>0} \frac{ [\kappa(B(x, \rho))]^{\frac{q}{p-1-q}}}{\rho^{\frac{n-\alpha p}{p-1}}} \le 
 2^{\frac{n-\alpha p}{p-1}} (\log 2) \,  \mathbf{K}_{\alpha, p, q}  \sigma(x). 
 \end{equation}
Consequently, 
\[
 \mathbf{K}_{\alpha, p, q}  \sigma\in L^r(\R^n) \Longrightarrow 
 \sup_{\rho>0} \frac{ [\kappa(B(x, \rho))]^{\frac{q}{p-1-q}}}{\rho^{\frac{n-\alpha p}{p-1}}}\in L^r(\R^n).
\]
 
It remains to prove the converse statement. 


Let $u \in L^q_{{\rm loc}}(\sigma)$ ($u\ge 0$) be a solution to  \eqref{eq:int2}. 
In \cite{CV2}, the following analogue of the bilateral pointwise estimates 
\eqref{two-sided} was obtained for nontrivial (minimal) solutions $u$ to \eqref{eq:int2}  in the  case $0<q<p-1$:
\begin{equation} \label{two-sided-frac}
c^{-1} [(\W_{\alpha, p} \sigma)^{\frac{p-1}{p-1-q}}+\mathbf{K}_{\alpha, p, q}  \sigma] 
\le u \le c [(\W_{\alpha, p} \sigma)^{\frac{p-1}{p-1-q}} +\mathbf{K}_{\alpha, p, q}  \sigma],  
\end{equation}
where $c>0$ is a constant which  depends only on $\alpha$, $p$, $q$, and $n$. 
Moreover  a nontrivial (super) solution exists if and only if both $\W_{\alpha, p} \sigma\not\equiv \infty$ and $\mathbf{K}_{\alpha, p, q}\not\equiv \infty$. 

It follows that 
$u \in L^r(\R^n)$ ($r>0$) exists if and only the following analogue of 
\eqref{cond-two-sided} holds: 
\begin{equation}\label{cond-two-sided-fr} 
 \mathbf{K}_{\alpha, p, q}  \sigma \in L^r(\R^n), \quad \W_{\alpha,p} \sigma \in L^{\frac{r(p-1)}{p-1-q}}(\R^n). 
\end{equation}
The second condition here   actually follows from the first one, both in \eqref{cond-two-sided} (in the case $\alpha=1$), and in 
\eqref{cond-two-sided-fr},  
that is, 
\begin{equation} \label{cond-two}
  \mathbf{K}_{\alpha, p, q}  \sigma \in L^r(\R^n) \Longrightarrow  \W_{\alpha, p}  \sigma \in L^{\frac{r(p-1)}{p-1-q}}(\R^n).
\end{equation}

Indeed, suppose that $\mathbf{K}_{\alpha, p, q}  \sigma \in L^r(\R^n) $. 
Using the following trivial estimate for balls $B=B(x, \rho)$, 
\begin{equation}\label{triv}
\sigma(B) |B|^{-\frac{n-\alpha p}{n(p-1)}}\le C \, [\kappa(B)]^q, 
\end{equation}
we see that 
\begin{equation}\label{low-k-est}
 \mathbf{K}_{\alpha, p, q}  \sigma(x) \ge C \, \int_0^\infty
 \Big[ \frac{\sigma(B(x, \rho))}{\rho^{n-\alpha p}}\Big]^{\frac{1}{p-1-q}} \frac{d \rho}{\rho}. 
\end{equation}
Hence,
\[
\int_0^\infty \Big[ \frac{\sigma(B(x, \rho))}{\rho^{n-\alpha p}}\Big]^{\frac{1}{p-1-q}} \frac{d \rho}{\rho} \in L^r(\R^n). 
\]

Estimates in  \cite{HJ}, \cite{JPW} yield that the preceding condition   is equivalent to $\W_{\alpha, p}  \sigma \in L^{\frac{r(p-1)}{p-1-q}}(\R^n)$. This proves \eqref{cond-two}.

In the same way, one can prove that there exists a (super) solution 
$u \in L^r(\R^n)$ to the dyadic 
version of  \eqref{eq:int2}, that is,  
\begin{equation}\label{eq:int2d}
u= \mathbf{W}^d_{\alpha, p, q} (u^q d \sigma) \quad \text{in} \, \, \R^n, 
\end{equation}
if and only if $ \mathbf{K}^d_{\alpha, p, q}  \sigma \in L^r(\R^n)$. 

It is known \cite{HW} that, for  $\omega\in \mathcal{M}^{+}(\R^n)$, 
the conditions $ \mathbf{W}^d_{\alpha, p, q}  \omega \in L^r(\R^n)$ and 
$ \mathbf{W}_{\alpha, p, q}  \omega \in L^r(\R^n)$ are equivalent.
From this it is easy to deduce, as in \cite{HW}, that  the conditions 
$ \mathbf{K}^d_{\alpha, p, q}  \sigma \in L^r(\R^n)$ and $ \mathbf{K}_{\alpha, p, q}  \sigma \in L^r(\R^n)$ are equivalent. Thus, to prove Theorem \ref{thm:main5} 
it is enough to prove its dyadic version, that is, 
to show that  
\[
\sup_{\rho>0} \frac{ [\kappa(B(x, \rho))]^{\frac{q}{p-1-q}}}{\rho^{\frac{n-\alpha p}{p-1}}}\in L^r(\R^n) \Longrightarrow \mathbf{K}^d_{\alpha, p, q}  \sigma \in L^r(\R^n).
\]

By Lemma \ref{lemma-new}, $\mathbf{K}^d_{\alpha, p, q}  \sigma \in L^r(\R^n)$ is equivalent to the right-hand side of \eqref{sup-est}, which is clearly dominated by 
its continuous version, that is, 
\begin{equation} \label{Lr-global}
\int_{\R^n} (\mathbf{K}_{\alpha, p, q}  \sigma)^r dx 
\le C \int_{\R^n} \sup_{\rho>0} \left[\frac{ \kappa(B(x, \rho))^{\frac{q(p-1)}{p-1-q}}}{\rho^{n- \alpha p}}\right]^{\frac{r}{p-1}} dx.
\end{equation}
This completes the proof of Theorem \ref{thm:main5}, and consequently Theorem \ref{thm:main1}. 
\end{proof}

\begin{proof}[Proof of Theorem \ref{thm:main2}]  In the case 
$0<r<\frac{n(p-1)}{n-p}$, it is known (\cite{HKM}, \cite{MZ}) that every $p$-superharmonic function $u \in L^r_{{\rm loc}}(\R^n)$, and so necessary and sufficient conditions for the existence 
of such a solution are given by \eqref{suff-cond}.

It is enough 
to consider solutions to \eqref{eq:int2} in the special case $\alpha=1$, although 
we present the proof for all $0<\alpha<\frac{n}{p}$. 
Let $r\ge \frac{n(p-1)}{n-\alpha p}$.  (It is easy to see that for $0<r<\frac{n(p-1)}{n-\alpha p}$, every solution  $u \in L^r_{{\rm loc}}(\R^n)$.) 
Notice that a solution 
$u \in L^r_{{\rm loc}}(\R^n)$  to \eqref{eq:int2} 
exists if and only if the following analogue of 
\eqref{cond-two-sided} holds: 
\begin{equation}\label{cond-two-sided-loc} 
 \mathbf{K}_{\alpha, p, q}  \sigma \in L^r_{{\rm loc}}(\R^n), \quad \W_{\alpha,p} \sigma \in L^{\frac{r(p-1)}{p-1-q}}_{{\rm loc}}(\R^n). 
\end{equation}
Again, as in the proof of \eqref{cond-two}, the second condition here   actually follows from the first one, that is, 
\begin{equation} \label{cond-two-loc}
  \mathbf{K}_{\alpha, p, q}  \sigma \in L^r_{{\rm loc}}(\R^n) \Longrightarrow  \W_{\alpha, p}  \sigma \in L^{\frac{r(p-1)}{p-1-q}}_{{\rm loc}}(\R^n),
\end{equation}
provided  \eqref{inf-w} and \eqref{inf} hold, which are both necessary for the existence of any solution. 
To prove \eqref{cond-two-loc}, 
let $B=B(0, R)$,  and suppose $\mathbf{K}_{\alpha, p, q} \sigma\in L^r(B)$ for every 
 $R>0$. Let us show that $\W_{\alpha, p}  \sigma \in L^{\frac{r(p-1)}{p-1-q}}(B)$.   Notice that for all $x \in B$, we have 
 \begin{equation*}
\begin{split}  
\W_{\alpha, p} \sigma_{(2B)^c}(x) & \le \int_R^\infty \Big( \frac{\sigma(B(0, 2\rho))}{\rho^{n-p}}\Big)^{\frac{1}{p-1}} \frac{d \rho}{\rho}, \\
& = 2^{\frac{\alpha p-n}{p-1}} \int_{2R}^\infty \Big( \frac{\sigma(B(0, t))}{t^{n-\alpha p}}\Big)^{\frac{1}{p-1}} \frac{d t}{t}.
 \end{split}  
 \end{equation*}
 Hence, by  \eqref{inf-w} we have $\W_{\alpha, p} \sigma_{(2B)^c}  \in L^{\infty}(B)$. It remains to show that 
 $\W_{\alpha, p} \sigma_{2B}  \in L^{\frac{r(p-1)}{p-1-q}}(B)$. In fact, we will prove 
 that $\W_{\alpha, p} \sigma_{2B}  \in L^{\frac{r(p-1)}{p-1-q}}(\R^n)$, which by Wolff's  inequality (\cite{HJ}, \cite{JPW}) is equivalent to 
 \[
 \int_{\R^n} \Big( \int_0^\infty \Big( \frac{\sigma(B(x, \rho)\cap 2B)}{\rho^{n-\alpha p}}\Big)^{\frac{1}{p-1-q}} \frac{d \rho}{\rho}\Big)^r dx<\infty.
 \]
 By \eqref{low-k-est}, we see that  $\mathbf{K}_{\alpha, p, q} \sigma_{2B}\in L^r(3B)$ yields 
\[
\int_{3B} \Big( \int_0^\infty \Big( \frac{\sigma(B(x, \rho)\cap 2B)}{\rho^{n-\alpha p}}\Big)^{\frac{1}{p-1-q}} \frac{d \rho}{\rho}\Big)^r dx<\infty.
\]
Hence, it remains   to prove that 
\[
I=\int_{(3B)^c} \Big( \int_0^\infty \Big( \frac{\sigma(B(x, \rho)\cap 2B)}{\rho^{n-\alpha p}}\Big)^{\frac{1}{p-1-q}} \frac{d \rho}{\rho}\Big)^r dx<\infty.
\]
Notice that in this integral $3R\le |x|<\rho+2R$, and consequently $\rho>\frac{|x|}{3}$.
For $r\ge \frac{n(p-1)}{n-\alpha p}$ and $0<q<p-1$ we have  $\frac{(n-\alpha p)r}{p-1-q}>n$, so that  
 \begin{equation*}
\begin{split}  
 I &\le \int_{(3B)^c}  \Big( \int_{\frac{|x|}{3}}^\infty\Big( \frac{\sigma(2B)}{\rho^{n-\alpha p}}\Big)^{\frac{1}{p-1-q}} \frac{d \rho}{\rho}\Big)^r dx 
\\&= C(\alpha, p, q,n) \, (\sigma(2B))^{\frac{r}{p-1-q}}\int_{|x|\ge 3R}  \frac{dx}{|x|^{\frac{(n-\alpha p)r}{p-1-q}}}<\infty.
 \end{split}  
 \end{equation*}
This proves \eqref{cond-two-loc}. 

It remains to show that $ \mathbf{K}_{\alpha, p, q}  \sigma \in L^r_{{\rm loc}}(\R^n)$. As above, it is enough to establish a dyadic version, 
$ \mathbf{K}^d_{\alpha, p, q}  \sigma \in L^r_{{\rm loc}}(\R^n)$. 
In other words, for any dyadic cube $P$, we need to show that 
\[
\int_P (\mathbf{K}^d_{\alpha, p, q}  \sigma)^r dx<\infty. 
\]
This condition naturally breaks into two parts: the first one is a localized condition 
\[
I=\int_P (\mathbf{K}^{d,P}_{\alpha, p, q}  \sigma_P)^r dx<\infty, 
\]
whereas the second one is 
\[
II = |P| \Big[ \sum_{R\supseteq P} \Big(\frac{ \kappa(R)^{\frac{q(p-1)}{p-1-q}}}{|R|^{1- \frac{\alpha p}{n}}}\Big)^{\frac{1}{p-1}} \Big]^{r} <\infty.
\]
By Lemma \ref{lemma-new-loc}, 
condition \eqref{main-ex} ensures that $I<\infty$, whereas $II< \infty$ by 
\eqref{inf-d}. The converse statement is obvious, since all the 
conditions \eqref{inf-w}, \eqref{inf-d}, and \eqref{main-ex}  are clearly 
necessary for the existence of a solution $u \in L^r_{{\rm loc}}(\R^n)$ in view of 
\eqref{est-super} and \eqref{cond-two-loc}. This completes the proof of Theorem \ref{thm:main2}.
\end{proof}

\begin{proof}[Proof of Corollary \ref{cor1}] We invoke estimates \eqref{b-k}, which were proved 
in \cite{CV3} under the assumption \eqref{cap-p}. Since $\frac{p-1}{p-1-q}>1$, by H\"older's inequality it is enough to ensure that $(\mathbf{W}_{1, p} \sigma)^{\frac{p-1}{p-1-q}} \in L^r_{{\rm loc}}(\R^n)$. As above, it suffices to show that, for any dyadic cube $P$, 
\[
\int_P (\mathbf{W}^d_{1, p}  \sigma)^{\frac{r(p-1)}{p-1-q}}  dx=I + II<\infty, 
\]
where 
\[
I=\int_P (\mathbf{W}^{d, P}_{1, p}  \sigma)^{\frac{r(p-1)}{p-1-q}}  dx,  \quad  
II = |P| \Big[ \sum_{R\supseteq P} \left(\frac{ \sigma(R)}{|R|^{1- \frac{p}{n}}} 
\right)^{\frac{1}{p-1}}\Big]^{\frac{r(p-1)}{p-1-q}}.
\]
The second term is finite by the necessary condition \eqref{k-m-cond}, 
which ensures that $\mathbf{W}_{1, p} \sigma\not\equiv\infty$. 

To show that the localized term $I<\infty$, notice that by a localized version of Wolff's  
inequality (see, for instance, \cite{V3}), 
\[
I \approx \sum_{Q\subseteq P} \Big(\frac{ \sigma(Q)}{|Q|^{1- \frac{p}{n}}} 
\Big)^{\frac{r}{p-1-q}}|Q|.
\]
On the other hand, \eqref{cap-p} 
yields the estimate (\cite{CV3}*{Lemma 2.1 and Remark 2.2}) 
\[
\int_P (\mathbf{W}^{d, P}_{1, p}  \sigma)^{s}  d\sigma\le C \sigma(P)<\infty, 
\]
for any $s>0$. In particular, for $s\ge 1$ we obviously have 
\[
\sum_{Q\subseteq P} \Big(\frac{ \sigma(Q)}{|Q|^{1- \frac{p}{n}}} 
\Big)^{\frac{s}{p-1}}\sigma(Q)\le C \sigma(P)<\infty. 
\]
Setting $s=\frac{[r-(p-1-q)](p-1)}{p-1-q}$, where without loss of generality we may assume 
$s\ge 1$ (for $r$ large enough), we deduce 
\[
\sum_{Q\subseteq P} \Big(\frac{ \sigma(Q)}{|Q|^{1- \frac{p}{n}}} 
\Big)^{\frac{r}{p-1-q}}|Q|\le |P|^{\frac{p}{n}}\sum_{Q\subseteq P}\Big(\frac{ \sigma(Q)}{|Q|^{1- \frac{p}{n}}} 
\Big)^{\frac{s}{p-1}}\sigma(Q)<\infty.
\]
Hence, $II<\infty$ as well, so that $(\mathbf{W}_{1, p} \sigma)^{\frac{p-1}{p-1-q}} \in L^r_{{\rm loc}}(\R^n)$. It follows by \cite{CV3}*{Theorem 1.2} that there exists  
a nontrivial solution $u \in L^r_{{\rm loc}}(\R^n)$.
\end{proof}

\begin{proof}[Proof of Theorem \ref{thm:main3}] By Lemma \ref{lemma-bmo} 
with $d \omega=u^q d \sigma$ in place of $\sigma$, a solution $u \in {\rm BMO}(\R^n)$ to \eqref{eq:p-laplacian} exists if (in the case $2-\frac{1}{n}<p<n$) and only if, for every ball $B(x, R)\subset \R^n$,  
 \begin{equation}\label{BMO-cond}
\omega(B(x, R)) = \int_{B(x, R)} u^q d \sigma \le C \, R^{n-p}. 
 \end{equation} 
Moreover, if $p>2-\frac{1}{n}$, then such a solution actually satisfies \eqref{s-grad} 
for all $0<s<\frac{n(p-1)}{n-1}$. 

By estimates  \eqref{two-sided}, it follows that \eqref{BMO-cond} holds if and only if 
 \begin{equation}\label{split-eq}
 \int_{B(x, R)}[ (\mathbf{W}_{1, p} \sigma)^{\frac{q(p-1)}{p-1-q}} 
 +  (\mathbf{K}_{1, p} \sigma)^{q}] d \sigma \le C \, R^{n-p}.
 \end{equation} 
 
 Moreover, by \cite{CV2}*{Lemma 4.2},  
for every ball $B=B(x, R)$, we have 
\begin{equation}\label{est-d}
[\kappa(B)]^{\frac{q(p-1)}{p-1-q}}
 \le C(p, q, n) \, \int_B u^q d \sigma.
\end{equation}
This proves the necessity of condition \eqref{bmo1}. 
To prove the necessity of condition  \eqref{bmo2}, notice that, for all $y \in B(x, R)$ 
and $\rho>2R$, 
we have $B(y, \rho)\supset B(x, \frac{\rho}{2})$. Letting 
$t= \frac{\rho}{2}$, we estimate,  for $y \in B(x, R)$,
 \begin{equation*}
\begin{split}  
\mathbf{K}_{1, p} \sigma(y) & \ge  \int_{2R}^\infty  
\Big(\frac{ [\kappa(B(x, \frac{\rho}{2}))]^{\frac{q(p-1)}{p-1-q}}}{\rho^{n-p}} \Big)^{\frac{1}{p-1}}
\frac{d \rho}{\rho}\\& =2^{\frac{p-n}{p-1}} \int_{R}^\infty  
\Big(\frac{ [\kappa(B(x, t))]^{\frac{q(p-1)}{p-1-q}}}{t^{n-p}} \Big)^{\frac{1}{p-1}}
\frac{d t}{t}.
 \end{split}  
 \end{equation*}
Thus,  \eqref{bmo2} follows from 
\eqref{split-eq}. The necessity of  \eqref{bmo3}  is deduced in a similar way. 

To prove the sufficiency of  conditions \eqref{bmo1}, \eqref{bmo2} and   \eqref{bmo3},  we first verify the estimate of  
the localized term in \eqref{split-eq}, with $\sigma_{2B}$ in place of $\sigma$ (here 
$2B=B(x, 2R)$), that is, 
\begin{equation}\label{est-d-loc}
\int_{B} [(\W_{1, p} \sigma_{2B})^{\frac{q(p-1)}{p-1-q}}+(\mathbf{K}_{1, p, q}  \sigma_{2B})^q] d \sigma  \le C \, R^{n-p}. 
\end{equation}

We invoke  the estimate \cite{CV2}*{Corollary 4.3},  
\begin{equation}\label{est-db}
\int_{2B} u_{2B}^q d \sigma \le   
[\kappa(2B)]^{\frac{q(p-1)}{p-1-q}}. 
\end{equation}
Here $u_{2B}$ denotes  a nontrivial solution to \eqref{eq:p-laplacian} with $\sigma_{2B}$ in place 
 of $\sigma$. Combining \eqref{est-db} 
with the lower pointwise estimate \eqref{two-sided} for $u_{2B}$ in place of $u$, namely, 
\[
c(p, q, n) [(\W_{1, p} \sigma_{2B})^{\frac{p-1}{p-1-q}}+\mathbf{K}_{1, p, q}  \sigma_{2B}] 
\le u_{2B}, 
\]
together with \eqref{bmo1}, yields \eqref{est-d-loc}. 

To obtain similar estimates  for  $\sigma_{(2B)^c}$ (the portion of $\sigma$ supported 
outside $2B$) in place of $\sigma$ in \eqref{split-eq}, notice that, for all 
$y \in B(x, R)$, we have  $[B(x, 2R)]^c\cap B(y, \rho)=\emptyset$ if $0<\rho<R$,  
and $B(y, \rho)\subset B(x, 2 \rho)$ if $\rho>R$. Hence, for $y \in B=B(x, R)$, 
 \begin{equation*}
\begin{split}  
\W_{1, p} \sigma_{(2B)^c}(y) & \le \int_R^\infty \Big( \frac{\sigma(B(x, 2\rho))}{\rho^{n-p}}\Big)^{\frac{1}{p-1}} \frac{d \rho}{\rho}, \\
\mathbf{K}_{1, p} \sigma_{(2B)^c}(y) & \le \int_R^\infty  \Big(\frac{ [\kappa(B(x, 2\rho))]^{\frac{q(p-1)}{p-1-q}}}{\rho^{n-p}}\Big)^{\frac{1}{p-1}} \frac{d \rho}{\rho}.
 \end{split}  
 \end{equation*}
Letting $t=2 \rho$ in these integrals, we estimate 
  \begin{equation*}
\begin{split}  
& \int_{B} [(\W_{1, p} \sigma_{(2B)^c})^{\frac{q(p-1)}{p-1-q}}+(\mathbf{K}_{1, p, q}  \sigma_{(2B)^c})^q] d \sigma\\ 
& \le C \sigma(B) \Big(\int_{2R}^\infty \Big( \frac{\sigma(B(x, t))}{t^{n-p}}\Big)^{\frac{1}{p-1}} \frac{d t}{t}
\Big)^{\frac{q(p-1)}{p-1-q}}
\\ &+ C \sigma(B) \Big( \int_{2R}^\infty  \Big(\frac{ [\kappa(B(x, t))]^{\frac{q(p-1)}{p-1-q}}}{t^{n-p}}\Big)^{\frac{1}{p-1}} \frac{d t}{t}\Big)^q.
 \end{split}  
 \end{equation*}
Using conditions \eqref{bmo2} and   \eqref{bmo3}, we deduce 
\[
\int_{B} [(\W_{1, p} \sigma_{(2B)^c})^{\frac{q(p-1)}{p-1-q}}+(\mathbf{K}_{1, p, q}  \sigma_{(2B)^c})^q] d \sigma\le C \, R^{n-p}. 
\]
This completes the proof of \eqref{split-eq}, and consequently, 
Theorem \ref{thm:main3}.\end{proof} 

\begin{proof}[Proof of Corollary \ref{cor2}] As in the proof of Theorem  \ref{thm:main3}, it follows from Lemma \ref{lemma-bmo} 
that a nontrivial solution $u \in {\rm BMO}(\R^n)$ to \eqref{eq:p-laplacian} exists if and only if, for every ball $B(x, R)\subset \R^n$,  condition \eqref{BMO-cond} holds.

 Moreover, 
 the upper estimate in \eqref{b-k}, which holds 
 for a minimal solution $u$ under the assumption \eqref{cap-p}, yields that  a sufficient condition for $u \in {\rm BMO}(\R^n)$ is given by 
   \begin{equation}\label{BMO-2}
 \int_{B(x, R)} [(\W_{1, p} \sigma)^{\frac{q(p-1)}{p-1-q}}+( \W_{1, p} \sigma)^q] d \sigma\le C \, R^{n-p}. 
  \end{equation}
 Since by \eqref{cap-p} we have  $\sigma (B(x, R))\le C \,  R^{n-p}$ for any ball $B(x, R)$, 
 it follows by H\"older's inequality that we can drop the second term in 
 \eqref{BMO-2}. In other words, the condition 
  \begin{equation}\label{BMO-3}
 \int_{B} (\W_{1, p} \sigma)^{\frac{q(p-1)}{p-1-q}} d \sigma\le C \, R^{n-p}, 
  \end{equation}
 for all balls $B=B(x, R)$, is sufficient. It is also necessary, since it follows 
 from  \eqref{BMO-cond} and the lower estimate   \eqref{b-k}.
 
 It remains to show that \eqref{BMO-3} is equivalent to  \eqref{bmo3}. Clearly, for all 
 $y \in B(x, R)$, we have  
 \[
\W_{1, p} \sigma(y) \ge C  \, \int_{2R}^\infty \Big( \frac{\sigma(B(x, t))}{t^{n-p}}\Big)^{\frac{1}{p-1}} \frac{d t}{t}. 
 \]
 Hence, \eqref{BMO-3}$\Longrightarrow$\eqref{bmo3}. To prove the converse, 
it suffices  to estimate only the localized part of \eqref{BMO-3}, namely, 
  \begin{equation}\label{BMO-4}
 \int_{B} (\W_{1, p} \sigma_{2B})^{\frac{q(p-1)}{p-1-q}}  d \sigma \le C \, R^{n-p}, 
  \end{equation} 
  since the term corresponding to $(\sigma_{2B})^c$ is estimated 
  as above using \eqref{bmo3}. Invoking  again \cite{CV3}*{Lemma 2.1 and Remark 2.2} 
  with $s=\frac{q(p-1)}{p-1-q}$ we see that \eqref{cap-p} yields 
  \[
   \int_{B} (\W_{1, p} \sigma_{2B})^{\frac{q(p-1)}{p-1-q}}  d \sigma \le C \, \sigma (2B) \le C \, \, R^{n-p}. 
  \]
  This shows that \eqref{BMO-4} holds, that is, \eqref{bmo3}$\Longrightarrow$ \eqref{BMO-3}.
 \end{proof}

The proof of Theorem \ref{thm:main4}, based on Lemma   
\ref{lemma-cap}, is similar to the above arguments, and is omitted here.



\end{document}